\date{\today}
\newtheorem{theorem}{Теорема}[section]
\newtheorem{proposition}[theorem]{Твердження}
\newtheorem{corollary}[theorem]{Наслiдок}
\newtheorem{lemma}[theorem]{Лема}
\theoremstyle{definition}
\newtheorem{construction}[theorem]{Конструкція}%[section]
\newtheorem{remark}[theorem]{Зауваження}%[section]
\begin{document}

\title[Напiвгрупа монотонних ко-скiнченних часткових гомеоморфiзмiв дiйсно{\"{\i}} прямо{\"{\i}}]{Напiвгрупа монотонних ко-скiнченних часткових гомеоморфiзмiв дiйсно{\"{\i}} прямо{\"{\i}}}

\author[Олег~Гутік, Катерина Мельник]{Олег~Гутік, Катерина Мельник}
\address{Механіко-математичний факультет, Львівський національний університет ім. Івана Франка, Університецька 1, Львів, 79000, Україна}
\email{o\_gutik@franko.lviv.ua, ovgutik@yahoo.com, chepil.kate@gmail.com}

\keywords{Semigroup of partial homeomorphisms, group of homeomorphisms, inverse semigroup, minimum group congruence, semidirect product, real line.}

\subjclass[2010]{20M15,  20M50, 18B40}

\begin{abstract}
У даній праці досліджується структура напівгрупи $\mathscr{P\!\!H}^+_{\!\!\operatorname{\textsf{cf}}}\!(\mathbb{R})$ усіх монотонних ко-скін\-чен\-них часткових гомеоморфізмів звичайної дійсної прямої $\mathbb{R}$. Доведено, що інверсна напівгрупа $\mathscr{P\!\!H}^+_{\!\!\operatorname{\textsf{cf}}}\!(\mathbb{R})$ є факторизовною та $F$-інверсною. Описано структуру в'язки напівгрупи $\mathscr{P\!\!H}^+_{\!\!\operatorname{\textsf{cf}}}\!(\mathbb{R})$, її двобічні ідеали, максимальні підгрупи та відношення Гріна на ній. Доведено, що фактор-напівгрупа $\mathscr{P\!\!H}^+_{\!\!\operatorname{\textsf{cf}}}\!(\mathbb{R})/\mathfrak{C}_{\textsf{mg}}$ за найменшою груповою конгруенцією $\mathfrak{C}_{\textsf{mg}}$ ізоморфна групі $\mathscr{H}^+\!(\mathbb{R})$ усіх гомеоморфізмів, що зберігають орієнтацію простору $\mathbb{R}$, а також, що напівгрупа $\mathscr{P\!\!H}^+_{\!\!\operatorname{\textsf{cf}}}\!(\mathbb{R})$ ізоморфна напівпрямому добутку $\mathscr{H}^+\!(\mathbb{R})\ltimes_\mathfrak{h}\mathscr{P}_{\!\infty}(\mathbb{R})$ вільної напівгратки з одиницею $(\mathscr{P}_{\!\infty}(\mathbb{R}),\cup)$ з групою $\mathscr{H}^+\!(\mathbb{R})$.

\medskip

\textbf{{Oleg Gutik, Kateryna Melnyk},} %
\textbf{{The semigroup of monotone co-finite partial homeomorphisms of the real line}}%
 
{In the paper we investigate the semigroup of monotone co-finite partial homeomorphisms of the space of the usual real line $\mathbb{R}$. We prove that the inverse semigroup $\mathscr{P\!\!H}^+_{\!\!\operatorname{\textsf{cf}}}\!(\mathbb{R})$ is factorizable and $F$-inverse. We describe the structure of the band of the semigroup $\mathscr{P\!\!H}^+_{\!\!\operatorname{\textsf{cf}}}\!(\mathbb{R})$, its two-sided ideals, maximal subgroups and Green's relations. We prove that the quotient semigroup $\mathscr{P\!\!H}^+_{\!\!\operatorname{\textsf{cf}}}\!(\mathbb{R})/\mathfrak{C}_{\textsf{mg}}$, where $\mathfrak{C}_{\textsf{mg}}$ is the maximum group congruence on $\mathscr{P\!\!H}^+_{\!\!\operatorname{\textsf{cf}}}\!(\mathbb{R})/\mathfrak{C}_{\textsf{mg}}$, is isomorphic to the group of all oriental homeomorphisms of the space $\mathbb{R}$, and showe that the semigroup  $\mathscr{P\!\!H}^+_{\!\!\operatorname{\textsf{cf}}}\!(\mathbb{R})$ is isomorphic to a semidirect product $\mathscr{H}^+\!(\mathbb{R})\ltimes_\mathfrak{h}\mathscr{P}_{\!\infty}(\mathbb{R})$ of the free semilattice with unit  $(\mathscr{P}_{\!\infty}(\mathbb{R}),\cup)$ by the group $\mathscr{H}^+\!(\mathbb{R})$.}
\end{abstract}

\maketitle

%\tableofcontents

\section{Термінологія та означення}

В даній праці ми користуватимемося термінологією з \cite{CliffordPreston1961-1967, Engelking1989, Petrich1984}.

Надалі у тексті потужність множини $A$ позначатимемо через $|A|$, перший не\-скін\-чен\-ний кардинал через $\omega$, і множину натуральних чисел --- через $\mathbb{N}$. Також, будемо вважати, що на множині дійсних чисел $\mathbb{R}$ визначена звичайна (евклідова) топологія.

Якщо визначене часткове відображення $\alpha\colon X\rightharpoonup Y$ з множини $X$ у множину $Y$, то через $\operatorname{dom}\alpha$ i $\operatorname{ran}\alpha$ будемо позначати його \emph{область визначення} та \emph{область значень}, відповідно, а через $(x)\alpha$ і $(A)\alpha$ --- образи елемента $x\in\operatorname{dom}\alpha$ та підмножини $A\subseteq\operatorname{dom}\alpha$ при частковому відображенні $\alpha$, відповідно. Часткове відображення $\alpha\colon X\rightharpoonup Y$ називається \emph{ко-скінченним}, якщо множини $X\setminus\operatorname{dom}\alpha$ та $Y\setminus\operatorname{ran}\alpha$ є скінченними.

Часткове відображення $\alpha\colon \mathbb{R}\rightharpoonup\mathbb{R}$ називається \emph{частковим гомеоморфізмом} простору $\mathbb{R}$, якщо його звуження $\alpha|_{\operatorname{dom}\alpha} \colon \operatorname{dom}\alpha\rightarrow \operatorname{ran}\alpha$ є гомеоморфізмом.

Рефлексивне, антисиметричне та транзитивне відношення на множині $X$ називається \emph{частковим порядком} на $X$. Множина $X$ із заданим на ній частковим порядком $\leqslant$ називається \emph{частково впорядкованою множиною} і позначається $(X,\leqslant)$.

Елемент $x$ частково впорядкованої множини $(X,\leqslant)$ називається:
\begin{itemize}
  \item \emph{максимальним} (\emph{мінімальним}) в $(X,\leqslant)$, якщо з відношення $x\leqslant y$ ($y\leqslant x$) в $(X,\leqslant)$ випливає рівність $x=y$;
  \item \emph{найбільшим} (\emph{найменшим}) в $(X,\leqslant)$, якщо $y\leqslant x$ ($x\leqslant y$) для всіх $y\in X$.
\end{itemize}

У випадку, якщо $(X,\leqslant)$~--- частково впорядкована множина і $x\leqslant y$, для деяких $x,y\in X$, то будемо говорити, що елементи $x$ та $y$~--- \emph{порівняльні} в $(X,\leqslant)$. Якщо ж для елементів $x\leqslant y$ не виконується жодне з відношень $x\leqslant y$ або $y\leqslant x$, то говоритимемо, що елементи $x$ та $y$ є непорівняльними в частково впорядкованій множині $(X,\leqslant)$. Частковий порядок $\leqslant$ на $X$ називається \emph{лінійним}, якщо довільні два елементи в $(X,\leqslant)$ є порівняльними.

Підмножина $A$ частково впорядкованої множини $(X,\leqslant)$ називається:
\begin{itemize}
  \item \emph{ланцюгом}, якщо відношення $\leqslant$ індукує на $A$ лінійний порядок;
  \item \emph{антиланцюгом}, якщо довільні два різні елементи в $A$ є непорівняльними стосовно індукованого з $(X,\leqslant)$ часткового порядку.
\end{itemize}
З леми Цорна випливає, що кожен ланцюг (антиланцюг) $A$ частково впорядкованої множини $(X,\leqslant)$ міститься в максимальному ланцюзі (антиланцюзі) $B$, стосовно від\-но\-шен\-ня включення підмножин, а отже кожен ланцюг (антиланцюг) в $(X,\leqslant)$ можна доповнити (необов'язково єдиним чином) до максимального ланцюга (антиланцюга). Лінійно впорядкована множина, яка є порядково ізоморфна множині від'ємних цілих чисел $\{-1, -2, -3, -4, \ldots\}$ зі звичайним порядком $\leqslant$ називається $\omega$-ланцюгом.

Відображення $h\colon X\rightarrow Y$ з частково впорядкованої множини $(X,\leqslant)$ в частково впорядковану множину $(Y,\leqslant)$ називається \emph{монотонним}, якщо з $x\leqslant y$ випливає $(x)h\leqslant (y)h$.

Якщо $S$~--- напівгрупа, то її підмножина ідемпотентів позначається через $E(S)$.  Напівгрупа $S$ називається \emph{інверсною}, якщо для довільного її елемента $x$ існує єдиний елемент $x^{-1}\in S$ такий, що $xx^{-1}x=x$ та $x^{-1}xx^{-1}=x^{-1}$. В інверсній напівгрупі $S$ вище означений елемент $x^{-1}$ називається \emph{інверсним до} $x$. \emph{В'язка}~--- це напівгрупа ідемпотентів, а \emph{напівгратка}~--- це комутативна в'язка. Надалі через $(\mathscr{P}_{\!\infty}(\mathbb{R}),\cup)$ по\-зна\-ча\-ти\-ме\-мо \emph{вільну напівгратку} з одиницею над множиною дійсних чисел, тобто множину усіх скінченних (включно з порожньою) підмножин множини $\mathbb{R}$ з операцією об'єднання.

Відношення еквівалентності $\mathfrak{K}$ на напівгрупі $S$ називається \emph{конгруенцією}, якщо для елементів $a$ i $b$ напівгрупи $S$ з того, що виконується умова $(a,b)\in\mathfrak{K}$ випливає, що $(ca,cb), (ad,bd) \in\mathfrak{K}$, для всіх $c,d\in S$. Відношення $(a,b)\in\mathfrak{K}$ ми також будемо записувати $a\mathfrak{K}b$, і в цьому випадку будемо говорити, що \emph{елементи $a$ i $b$ є $\mathfrak{K}$-еквівалентними}. На кожній напівгрупі $S$ існують наступні конгруенції: \emph{універсальна} $\mathfrak{U}_S=S\times S$ та \emph{одинична} (\emph{діагональ}) $\Delta_S=\{(s,s)\colon s\in S\}$. Такі конгруенції називаються \emph{тривіальними}. Кожен двобічний ідеал $I$ напівгрупи $S$ породжує на ній конгруенцію Ріса: $\mathfrak{K}_I=(I\times I)\cup\Delta_S$.

Якщо $S$~--- напівгрупа, то на $E(S)$ визначено частковий порядок:
\begin{equation*}
    e\leqslant f \quad \hbox{ тоді і лише тоді, коли } \quad ef=fe=e.
\end{equation*}
Так означений частковий порядок на $E(S)$ називається \emph{природним}.

Означимо відношення $\leqslant$ на інверсній напівгрупі $S$ наступним чином:
\begin{equation*}
    s\leqslant t \qquad \hbox{тоді і лише тоді, коли}\qquad s=te.
\end{equation*}
для деякого ідемпотента $e\in S$. Так означений частковий порядок називається \emph{при\-род\-ним част\-ковим порядком} на інверсній напівгрупі $S$~\cite{Lawson1998}. Очевидно, що звуження природного часткового порядку $\leqslant$ на інверсній напівгрупі $S$ на її в'язку $E(S)$ є при\-род\-ним частковим порядком на $E(S)$. Інверсна напівгрупа $S$ називається \emph{факторизовною}, якщо для кожного елемента $s\in S$ існує елемент $g$ групи одиниць напівгрупи $S$ такий, що $s\leqslant g$ стосовно природного часткового порядку $\leqslant$ на $S$.

Надалі, через $\mathscr{P\!\!H}^+_{\!\!\operatorname{\textsf{cf}}}\!(\mathbb{R})$ ми позначатимемо множину усіх монотонних ко-скінченних часткових гомеоморфізмів топологічного простору $\mathbb{R}$.

Оскільки для довільного елемента $\alpha\in \mathscr{P\!\!H}^+_{\!\!\operatorname{\textsf{cf}}}\!(\mathbb{R})$ і для довільної підмножини $A\subseteq \operatorname{dom}\alpha$ звуження $\alpha|_{\operatorname{dom}\alpha\setminus A}\colon \operatorname{dom}\alpha\setminus A\rightarrow \operatorname{ran}\alpha\setminus (A)\alpha$ є частковим гомеоморфізмом простору $\mathbb{R}$, обернене часткове відображення $\alpha^{-1}$ до $\alpha$ існує, і $\alpha^{-1}\in \mathscr{P\!\!H}^+_{\!\!\operatorname{\textsf{cf}}}\!(\mathbb{R})$, то виконується наступне твердження.

\begin{proposition}\label{proposition-1.1}
Множина $\mathscr{P\!\!H}^+_{\!\!\operatorname{\textsf{cf}}}\!(\mathbb{R})$ з операцією композиція часткових відображень є ін\-верс\-ним підмоноїдом симетричного інверсного моноїда $\mathscr{I}\!(\mathbb{R})$ над множиною дійсних чисел $\mathbb{R}$.
\end{proposition}

Надалі одиницю та групу одиниць напівгрупи $\mathscr{P\!\!H}^+_{\!\!\operatorname{\textsf{cf}}}\!(\mathbb{R})$ будемо позначати через $1$ та $H(1)$, відповідно. З означення напівгрупи $\mathscr{P\!\!H}^+_{\!\!\operatorname{\textsf{cf}}}\!(\mathbb{R})$ випливає, що її група одиниць $H(1)$ ізоморфна групі $\mathscr{H}^+\!(\mathbb{R})$ усіх гомеоморфізмів, що зберігають орієнтацію (монотонних гомеоморфізмів) простору $\mathbb{R}$, причому група $\mathscr{H}^+\!(\mathbb{R})$ є простою (див. \cite[Наслідок~1]{Anderson1958}).

\section{Мотивація досліджень і коротка історична довідка}

Дослідження автоморфізмів і груп автоморфізмів многовидів малої розмірності фор\-му\-ють широку область сучасної математики, яка дуже швидко розвивається та роз\-та\-шо\-ва\-на на стику топології, алгебри та теорії динамічних систем. Ця область охоплює вивчення груп гомеоморфізмів прямої та кола, теорію автоморфізмів поверхонь і теорію груп класів відображень, найважливішим частковим випадком яких є групи кос Артіна, — в силу чого вказана область тісно пов'язана практично з усіма розділами мало\-ви\-мір\-ної топології (у першу чергу — с теорією вузлів і зачеплень), з диференціальною та гіперболічною геометрією, теорією ламинацій та теорією Тайхмюллєра, з комбінатор\-ною та геометричною теорією груп, теорією впорядкованих груп, і навіть з крипто\-гра\-фі\-єю.

Автоморфізмам і групам автоморфізмів многовидів розмірності 1 і 2 присвячені фундаментальні праці Клейна, Фрике, Пуанкаре, Гурвіца, Дена, Данжуа, Александера, Нільсена, Артіна, Керек'ярто, А.А.Маркова. Пізніше вказаною проблематикою зай\-ма\-ли\-ся В. Магнус, В. Бурау, Дж. Бірман, X. Цішанг, В. І. Арнольд, Г. А. Маргуліс, У. Тьорстон, О. Я. Віро, Ф. Гарсайд, В. Джонс, Е. Гіз і багато інших. В останнє десятиліття в цій області отримані такі важливі результати, як розв'язок С. Керкхофом проблеми Нільсена про реалізацію, відкриття порядку Деорнуа, доведення лінійності груп кос (Д. Краммер, С. Бігелоу) та інші. Розв'язок схожого типу  проблем потребує різноманітної техніки, а нові досягнення теорії (груп) автоморфізмів застосовуються (і, як правило, мають суттєві наслідки) в суміжних областях математики.

Так, зокрема сучасні дослідження груп гомеоморфізмів прямої викладено в оглядах Бекларяна \cite{Beklaryan-2004, Beklaryan-2015} та її застосування в теорії динамічних систем у монографії \cite{Katok-Hasselblatt-1995}.

Основні результати теорії напівгруп перетворень отримані в період
50-70-х років минулого століття викладені в оглядах Меггіла
\cite{Magill1975} та Глускіна, Шайна, Шнепермана та Ярокера
\cite{Gluskin1977a}. У цьому напрямку працювали такі відомі
математики, як Гауі, Гельфанд, Глускін, Грін, Енгелькінг, Кліффорд,
Ляпін, Меггіл, Престон, Саббах, Серпінський, Сушкевич, Улам,  Шайн,
Шнеперман, Шутов, Ярокер. На думку Меггіла (див. \cite{Magill1975}) теорія напівгруп
неперервних перетворень топологічних просторів бере свій початок з
робіт Глускіна \cite{Gluskin1959, Gluskin1959a, Gluskin1959b,
Gluskin1959c, Gluskin1960, Gluskin1960a}. В основному ці праці
Глускіна присвячені описанню структури напівгрупи $S(I)$ неперервних
перетворень одиничного відрізка $I$, а також описанню піднапівгруп
напівгрупи $S(X)$ неперервних перетворень топологічного прос\-то\-ру
$X$. Напівгрупа $S(I)$ неперервних перетворень одиничного відрізка
також дос\-лід\-жу\-ва\-лась Шутовим в працях \cite{Shutov1963a,
Shutov1963b}, де він описав максимальну власну конгруенцію на
$S(I)$.

Напівгрупа $S(I)$ також досліджувалась в працях \cite{Insaridze,
Shneperman1962a, Shneperman1963, Shneperman1965, Shneperman1966,
Cezus, Jarnik, Mioduszewski, Rosicky1974, Rosicky1974a}, зокрема в
роботах \cite{Gluskin1959, Insaridze} були описані конгруенц-прості
піднапівгрупи в $S(I)$. Шне\-пер\-ман \cite{Shneperman1965a} та Уарндоф
\cite{Warndof} показали, що одиничний відрізок визначається
напівгрупою неперервних перетворень. Інші класи топологічних
просторів, що визначаються своїми напівгрупами неперервних
перетворень були описані Уарндофом в \cite{Warndof} і Росіцким в
\cite{Rosicky1974, Rosicky1974a}. Зокрема такими є: локально зв'язні
сепарабельні метричні континууми, локально евклідові гаусдорфові
простори, нульвимірні метричні простори, $CW$-комплекси та інші.
Також О'Рейлі в праці \cite{Reilly} довела, що кожен гаусдорфовий
простір $X$ визначається напівгрупою усіх компактних відношень на
$X$.

Зауважимо, що група гомеоморфізмів дійсної прямої ізоморфна групі го\-мео\-мор\-фіз\-мів одиничного відрізка (інтервалу). Таким чином виникає задача: \emph{описати струк\-ту\-ру напівгрупи часткових
гомеоморфізмів топологічного простору $X$}, а в частковому
ви\-пад\-ку одиничного відрізка, чи дійсної прямої. Однією з останніх робіт з цієї тематики є стаття Чучмана \cite{Chuchman2011ADM}, в якій описано структуру напівгрупи замкнених зв'язних част\-ко\-вих гомеоморфізмів
одиничного відрізка з однією нерухомою точкою.

У даній праці досліджується структура напівгрупи $\mathscr{P\!\!H}^+_{\!\!\operatorname{\textsf{cf}}}\!(\mathbb{R})$ усіх монотонних ко-скінченних часткових гомеоморфізмів дійсної прямої $\mathbb{R}$. Доведено, що інверсна на\-пів\-гру\-па $\mathscr{P\!\!H}^+_{\!\!\operatorname{\textsf{cf}}}\!(\mathbb{R})$ є фак\-то\-ри\-зов\-ною та $F$-інверсною. Описано структуру в'язки на\-пів\-гру\-пи $\mathscr{P\!\!H}^+_{\!\!\operatorname{\textsf{cf}}}\!(\mathbb{R})$, її двобічні ідеали, максимальні підгрупи та відношення Гріна на ній. Доведено, що фактор-напівгрупа $\mathscr{P\!\!H}^+_{\!\!\operatorname{\textsf{cf}}}\!(\mathbb{R})/\mathfrak{C}_{\textsf{mg}}$ за найменшою груповою конгруенцією $\mathfrak{C}_{\textsf{mg}}$ ізоморфна групі $\mathscr{H}^+\!(\mathbb{R})$ усіх гомеоморфізмів, що зберігають орієнтацію простору $\mathbb{R}$, а також, що напівгрупа $\mathscr{P\!\!H}^+_{\!\!\operatorname{\textsf{cf}}}\!(\mathbb{R})$ ізоморфна напівпрямому добутку $\mathscr{H}^+\!(\mathbb{R})\ltimes_\mathfrak{h}\mathscr{P}_{\!\infty}(\mathbb{R})$ вільної напівгратки з одиницею $(\mathscr{P}_{\!\infty}(\mathbb{R}),\cup)$ з групою $\mathscr{H}^+\!(\mathbb{R})$.
%%%%%%%%%%%%%%%%%%%%%%%%%%%%%%%%%%%%%%%%%%%%%%%%%%%%%

\section{Алгебраїчні властивості напівгрупи $\mathscr{P\!\!H}^+_{\!\!\operatorname{\textsf{cf}}}\!(\mathbb{R})$}

Наступне твердження описує в'язку напівгрупи $\mathscr{P\!\!H}^+_{\!\!\operatorname{\textsf{cf}}}\!(\mathbb{R})$.

\begin{proposition}\label{proposition-2.1}
\begin{itemize}
  \item[$(i)$] Елемент $\varepsilon$ напівгрупи $\mathscr{P\!\!H}^+_{\!\!\operatorname{\textsf{cf}}}\!(\mathbb{R})$ є ідемпотентом тоді і лише тоді, коли $\varepsilon$~--- тотожне відображення ко-скінченної підмножини в $\mathbb{R}$.
  \item[$(ii)$] В'язка $E(\mathscr{P\!\!H}^+_{\!\!\operatorname{\textsf{cf}}}\!(\mathbb{R}))$ ізоморфна вільній напівгратці з одиницею $(\mathscr{P}_{\!\infty}(\mathbb{R}),\cup)$ сто\-сов\-но відображення $\mathfrak{h}\colon E(\mathscr{P\!\!H}^+_{\!\!\operatorname{\textsf{cf}}}\!(\mathbb{R})) \rightarrow (\mathscr{P}_{\!\infty}(\mathbb{R}),\cup)$, означеного за формулою $(\varepsilon)\mathfrak{h}=\mathbb{R}\setminus \operatorname{dom}\varepsilon$.
  \item[$(iii)$] Кожен максимальний ланцюг напівгратки $E(\mathscr{P\!\!H}^+_{\!\!\operatorname{\textsf{cf}}}\!(\mathbb{R}))$ є $\omega$-ланцюгом.
  \item[$(iv)$] Кожен максимальний антиланцюг напівгратки $E(\mathscr{P\!\!H}^+_{\!\!\operatorname{\textsf{cf}}}\!(\mathbb{R}))$, що не містить її оди\-ни\-цю має потужність континуум.
\end{itemize}
\end{proposition}

\begin{proof}
Твердження $(i)$ випливає з твердження~\ref{proposition-1.1}. Твердження $(ii)$ є очевидним, і з нього випливають $(iii)$ і $(iv)$.
\end{proof}

Оскільки при гомеоморфізмі зберігається кількість компонент зв'язності, то ви\-ко\-ну\-єть\-ся наступне твердження.

\begin{proposition}\label{proposition-2.2}
Якщо $\alpha\in \mathscr{P\!\!H}^+_{\!\!\operatorname{\textsf{cf}}}\!(\mathbb{R})$, то $|\mathbb{R}\setminus\operatorname{dom}\alpha|= |\mathbb{R}\setminus\operatorname{ran}\alpha|$. Більше того, якщо $\alpha$~--- част\-ковий монотонний гомеоморфізм простору $\mathbb{R}$ такий, що $|\mathbb{R}\setminus\operatorname{dom}\alpha|= |\mathbb{R}\setminus\operatorname{ran}\alpha|<\omega$, то $\alpha\in \mathscr{P\!\!H}^+_{\!\!\operatorname{\textsf{cf}}}\!(\mathbb{R})$.
\end{proposition}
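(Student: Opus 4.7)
The plan is to prove both parts by a connected-components count on the open subsets $\operatorname{dom}\alpha$ and $\operatorname{ran}\alpha$ of $\mathbb{R}$. For the first assertion, I would let $\alpha\in\mathscr{P\!\!H}^+_{\!\!\operatorname{\textsf{cf}}}\!(\mathbb{R})$, set $F_1=\mathbb{R}\setminus\operatorname{dom}\alpha$ and $F_2=\mathbb{R}\setminus\operatorname{ran}\alpha$ (finite by co-finiteness), and put $n=|F_1|$ and $m=|F_2|$. Removing $n$ points from $\mathbb{R}$ leaves an open set with exactly $n+1$ connected components (a pair of open rays together with the bounded open intervals between consecutive removed points), so $\operatorname{dom}\alpha$ has $n+1$ components and $\operatorname{ran}\alpha$ has $m+1$ components.

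The key step is to exploit that $\alpha|_{\operatorname{dom}\alpha}\colon\operatorname{dom}\alpha\to\operatorname{ran}\alpha$ is an order-preserving homeomorphism. For each component $C$ of $\operatorname{dom}\alpha$, which is an open interval, the image $\alpha(C)$ is a connected open subset of $\mathbb{R}$, hence an open interval contained in $\operatorname{ran}\alpha$. Order-preservation together with bijectivity will force these $n+1$ images to be pairwise disjoint, linearly ordered along $\mathbb{R}$, and to exhaust $\operatorname{ran}\alpha$. Since any two disjoint nonempty open intervals of $\mathbb{R}$ always lie in different connected components of their union, these $n+1$ images must be precisely the connected components of $\operatorname{ran}\alpha$. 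I therefore conclude $n+1=m+1$, that is, $|\mathbb{R}\setminus\operatorname{dom}\alpha|=|\mathbb{R}\setminus\operatorname{ran}\alpha|$.

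For the second assertion, the hypothesis that $\alpha$ is a partial monotone (orientation-preserving) homeomorphism of $\mathbb{R}$ with both complements $\mathbb{R}\setminus\operatorname{dom}\alpha$ and $\mathbb{R}\setminus\operatorname{ran}\alpha$ finite is verbatim the definition of membership in $\mathscr{P\!\!H}^+_{\!\!\operatorname{\textsf{cf}}}\!(\mathbb{R})$, so the conclusion will be immediate from the definitions introduced in the first section.

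The main (modest) obstacle lies in the first part: one must justify carefully that order-preservation, combined with continuity and bijectivity, forces the images $\alpha(C)$ of the domain components to appear as \emph{the} components of $\operatorname{ran}\alpha$, neither splitting nor merging them. The ``$+$'' hypothesis will be essential here, since without it an orientation-reversing piece could in principle permute images across components and disturb the correspondence on which the count relies.
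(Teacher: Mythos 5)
Your argument is correct, and it is worth noting that the paper states Proposition~\ref{proposition-2.2} without any proof at all, treating it as immediate from the definitions; so there is no printed argument to compare against, and what you supply is a legitimate filling-in of the omitted details. The component count is the natural route: $\operatorname{dom}\alpha=\mathbb{R}\setminus F_1$ and $\operatorname{ran}\alpha=\mathbb{R}\setminus F_2$ are open subsets of $\mathbb{R}$ with exactly $|F_1|+1$ and $|F_2|+1$ connected components, and a homeomorphism between them forces these numbers to coincide. One correction to your closing remark, however: the monotonicity hypothesis (the ``$+$'') is \emph{not} essential for the first assertion. Any homeomorphism $\operatorname{dom}\alpha\to\operatorname{ran}\alpha$ induces a bijection between the sets of connected components, since each domain component maps onto a connected, relatively clopen subset of $\operatorname{ran}\alpha$, hence onto a component; pairwise disjointness and exhaustion already follow from bijectivity together with continuity of $\alpha$ and $\alpha^{-1}$, and an orientation-reversing piece could at worst permute the components, which does not disturb their number. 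So your proof goes through, but the step you single out as the ``main obstacle'' is even softer than you suggest. The second assertion is, as you say, a direct unwinding of the definition of $\mathscr{P\!\!H}^+_{\!\!\operatorname{\textsf{cf}}}\!(\mathbb{R})$ (and the stated equality of the two cardinalities is not even needed for it --- only their finiteness).
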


Наступне твердження випливає з означення напівгрупи $\mathscr{P\!\!H}^+_{\!\!\operatorname{\textsf{cf}}}\!(\mathbb{R})$.

\begin{proposition}\label{proposition-2.3}
Для довільних елементів $\alpha$ i $\beta$ напівгрупи
$\mathscr{P\!\!H}^+_{\!\!\operatorname{\textsf{cf}}}\!(\mathbb{R})$ виконується умо\-ва
\begin{equation*}
    |\mathbb{R}\setminus\operatorname{dom}(\alpha\beta)|\leqslant
    |\mathbb{R}\setminus\operatorname{dom}\alpha|+
    |\mathbb{R}\setminus\operatorname{dom}\beta|.
\end{equation*}
\end{proposition}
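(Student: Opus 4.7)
The plan is to analyze the set $\mathbb{R}\setminus\operatorname{dom}(\alpha\beta)$ directly, by decomposing it into two pieces whose cardinalities are easy to estimate. For partial maps one has the standard description
\[
\operatorname{dom}(\alpha\beta) = \{x\in\operatorname{dom}\alpha : (x)\alpha\in\operatorname{dom}\beta\} = \bigl(\operatorname{ran}\alpha\cap\operatorname{dom}\beta\bigr)\alpha^{-1},
\]
and in particular $\operatorname{dom}(\alpha\beta)\subseteq\operatorname{dom}\alpha$.

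The first step is to write the decomposition
\[
\mathbb{R}\setminus\operatorname{dom}(\alpha\beta) = \bigl(\mathbb{R}\setminus\operatorname{dom}\alpha\bigr)\cup\bigl((\mathbb{R}\setminus\operatorname{dom}\beta)\cap\operatorname{ran}\alpha\bigr)\alpha^{-1},
\]
and to observe that the two summands on the right are disjoint, since the second is contained in $\operatorname{dom}\alpha$. A point of $\mathbb{R}$ fails to lie in $\operatorname{dom}(\alpha\beta)$ either because $\alpha$ is not defined at it, or because $\alpha$ is defined and sends it outside of $\operatorname{dom}\beta$; these are exactly the two cases captured above.

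The second (decisive) step is the remark that $\alpha|_{\operatorname{dom}\alpha}$ is a bijection between $\operatorname{dom}\alpha$ and $\operatorname{ran}\alpha$, which is built into the definition of a partial homeomorphism used in Proposition~\ref{proposition-1.1}. Consequently the $\alpha$-preimage of any subset of $\operatorname{ran}\alpha$ has the same cardinality as that subset, so the second piece has cardinality $|\operatorname{ran}\alpha\cap(\mathbb{R}\setminus\operatorname{dom}\beta)|\leqslant|\mathbb{R}\setminus\operatorname{dom}\beta|$.

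Combining these two bounds by cardinal subadditivity over unions yields the desired inequality. There is essentially no obstacle here; the whole argument is bookkeeping once the domain-of-composition formula and the bijectivity of $\alpha$ on its domain are invoked. The bound will in general be strict, since any point of $(\mathbb{R}\setminus\operatorname{dom}\beta)\setminus\operatorname{ran}\alpha$ contributes nothing to the left-hand side, and likewise any point of $\operatorname{dom}\alpha$ that is also mapped out of $\operatorname{dom}\beta$ is counted only once on the left but potentially twice on the right.
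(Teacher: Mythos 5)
Your proof is correct. The paper states Proposition~\ref{proposition-2.3} without any proof, so there is nothing to compare against; your decomposition of $\mathbb{R}\setminus\operatorname{dom}(\alpha\beta)$ into the disjoint pieces $\mathbb{R}\setminus\operatorname{dom}\alpha$ and $\bigl(\operatorname{ran}\alpha\cap(\mathbb{R}\setminus\operatorname{dom}\beta)\bigr)\alpha^{-1}$, combined with the fact that $\alpha$ is a bijection of $\operatorname{dom}\alpha$ onto $\operatorname{ran}\alpha$, is exactly the standard argument the authors evidently regarded as evident. (Only your closing aside on strictness is slightly off: the sole source of slack is points of $\mathbb{R}\setminus\operatorname{dom}\beta$ outside $\operatorname{ran}\alpha$, since a point of $\operatorname{dom}\alpha$ mapped out of $\operatorname{dom}\beta$ and its image are distinct contributions to the two summands on the right; this does not affect the proof of the inequality itself.)
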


\begin{lemma}\label{lemma-2.4}
Нехай $n$~--- довільне невід'ємне ціле число. Тоді для довільних елементів $\alpha$ i $\beta$ напівгрупи $\mathscr{P\!\!H}^+_{\!\!\operatorname{\textsf{cf}}}\!(\mathbb{R})$ таких, що $|\mathbb{R}\setminus\operatorname{dom}\alpha|=          |\mathbb{R}\setminus\operatorname{dom}\beta|=n$ існують елементи $\gamma$ i $\delta$ в $\mathscr{P\!\!H}^+_{\!\!\operatorname{\textsf{cf}}}\!(\mathbb{R})$ такі, що $\alpha=\gamma\cdot\beta\cdot\delta$ і          $|\mathbb{R}\setminus\operatorname{dom}\gamma|=          |\mathbb{R}\setminus\operatorname{dom}\delta|=n$.
\end{lemma}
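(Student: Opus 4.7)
The plan is to factor $\alpha$ through $\beta$ by making a suitable ``change of punctured points'' on each side. Write $A_1=\mathbb{R}\setminus\operatorname{dom}\alpha$, $A_2=\mathbb{R}\setminus\operatorname{ran}\alpha$, $B_1=\mathbb{R}\setminus\operatorname{dom}\beta$, and $B_2=\mathbb{R}\setminus\operatorname{ran}\beta$; by hypothesis together with Proposition~\ref{proposition-2.2} all four of these sets have cardinality $n$.

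First I would construct an auxiliary $\gamma\in\mathscr{P\!\!H}^+_{\!\!\operatorname{\textsf{cf}}}\!(\mathbb{R})$ with $\operatorname{dom}\gamma=\mathbb{R}\setminus A_1$ and $\operatorname{ran}\gamma=\mathbb{R}\setminus B_1$. Enumerating $A_1=\{a_1<\cdots<a_n\}$ and $B_1=\{b_1<\cdots<b_n\}$, both complements split into $n+1$ open intervals of the same order type (two unbounded rays together with $n-1$ bounded open intervals), so $\gamma$ may be defined piecewise by any increasing homeomorphism between corresponding intervals---for instance, the obvious affine map on each bounded piece and a translation on each unbounded ray. Since the pieces are ordered compatibly on both sides, the resulting $\gamma$ is an orientation-preserving partial homeomorphism with $|\mathbb{R}\setminus\operatorname{dom}\gamma|=n$, and it sends $\operatorname{dom}\gamma$ bijectively onto $\operatorname{dom}\beta=\mathbb{R}\setminus B_1$. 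Consequently the product $\gamma\beta$ is an orientation-preserving bijection from $\mathbb{R}\setminus A_1$ onto $\mathbb{R}\setminus B_2$.

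With $\gamma$ in hand I would then set $\delta:=(\gamma\beta)^{-1}\cdot\alpha$, that is, $(y)\delta=\bigl((y)(\gamma\beta)^{-1}\bigr)\alpha$ for $y\in\mathbb{R}\setminus B_2$. As a composition of two orientation-preserving homeomorphisms with cofinite domain (namely $(\gamma\beta)^{-1}\colon\mathbb{R}\setminus B_2\to\mathbb{R}\setminus A_1$ followed by $\alpha\colon\mathbb{R}\setminus A_1\to\mathbb{R}\setminus A_2$), the map $\delta$ belongs to $\mathscr{P\!\!H}^+_{\!\!\operatorname{\textsf{cf}}}\!(\mathbb{R})$ with $|\mathbb{R}\setminus\operatorname{dom}\delta|=|B_2|=n$. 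The equality $\alpha=\gamma\beta\delta$ is then built into the construction: on $\mathbb{R}\setminus A_1$ the right-hand side evaluates to $(x)\alpha$ directly from the definition of $\delta$, while the domains of the two sides coincide because $\gamma$ maps $\mathbb{R}\setminus A_1$ onto $\operatorname{dom}\beta$ and $\gamma\beta$ maps it onto $\operatorname{dom}\delta=\mathbb{R}\setminus B_2$. The only step with real content is the construction of $\gamma$---exhibiting an orientation-preserving homeomorphism between two $n$-point punctured copies of the real line---and once this order-isomorphism is produced everything else reduces to formal manipulation of inverse-semigroup products.
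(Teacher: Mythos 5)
Your proposal is correct and follows essentially the same route as the paper: your $\gamma$ is exactly the paper's map $\eta$, assembled interval-by-interval between the complements of $\operatorname{dom}\alpha$ and $\operatorname{dom}\beta$, and your $\delta=(\gamma\beta)^{-1}\alpha$ coincides with the paper's $\delta=\beta^{-1}\eta^{-1}\alpha$. The only cosmetic differences are that the paper verifies $\alpha=\gamma\beta\delta$ by an idempotent computation (using $\eta\eta^{-1}=\alpha\alpha^{-1}$ and $\eta^{-1}\eta=\beta\beta^{-1}$) rather than pointwise, and handles the degenerate case $n=0$ separately.
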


\begin{proof}
Нехай $\alpha$ i $\beta$~--- елементи напівгрупи $\mathscr{P\!\!H}^+_{\!\!\operatorname{\textsf{cf}}}\!(\mathbb{R})$ такі, що $|\mathbb{R}\setminus\operatorname{dom}\alpha|=          |\mathbb{R}\setminus\operatorname{dom}\beta|=n$. У випадку $n=0$ маємо, що $\alpha$ i $\beta$~--- елементи групи одиниць $H(1)$ напівгрупи $\mathscr{P\!\!H}^+_{\!\!\operatorname{\textsf{cf}}}\!(\mathbb{R})$, а отже існують елементи $\gamma,\delta\in \mathscr{H}^+\!(\mathbb{R})\subseteq \mathscr{P\!\!H}^+_{\!\!\operatorname{\textsf{cf}}}\!(\mathbb{R})$ такі, що $\alpha=\gamma\cdot\beta\cdot\delta$ і          $|\mathbb{R}\setminus\operatorname{dom}\gamma|=          |\mathbb{R}\setminus\operatorname{dom}\delta|=0$. Тому, надалі будемо вважати, що $n>0$. Нехай $\mathbb{R}\setminus\operatorname{dom}\alpha=\{a_1,\ldots,a_n\}$ i $\mathbb{R}\setminus\operatorname{dom}\beta=\{b_1,\ldots,b_n\}$, причому $a_1<\ldots<a_n$ i $b_1<\ldots<b_n$. Через $\eta_0$ позначимо довільний гомеоморфізм інтервала $(-\infty,a_1)$ на інтервал $(-\infty,b_1)$, через $\eta_n$ позначимо довільний гомеоморфізм інтервала $(a_n,+\infty)$ на інтервал $(b_n,+\infty)$, і для довільного натурального числа $i=1,\ldots,n-1$ --- через $\eta_i$ позначимо довільний гомеоморфізм інтервала $(a_i,a_{i+1})$ на інтервал $(b_i,b_{i+1})$. Такі гомеоморфізми (навіть як лінійні відображення) існують, оскільки кожні такі два інтервали є попарно гомеоморфними \cite{Engelking1989}. Покладемо $\eta=\eta_0\cup\eta_1\cup\cdots\cup\eta_n$. Тоді, очевидно, що $\eta\in \mathscr{P\!\!H}^+_{\!\!\operatorname{\textsf{cf}}}\!(\mathbb{R})$, $\alpha\alpha^{-1}=\eta\eta^{-1}$ i $\eta^{-1}\eta=\beta\beta^{-1}$. Таким чином, отримуємо
\begin{equation*}
  \alpha =\alpha\alpha^{-1}\alpha=\eta\eta^{-1}\alpha= (\eta\eta^{-1})(\eta\eta^{-1})\alpha= \eta(\eta^{-1}\eta)\eta^{-1}\alpha=
  \eta(\beta\beta^{-1})\eta^{-1}\alpha= \eta\beta(\beta^{-1}\eta^{-1}\alpha).
\end{equation*}
З побудови гомеоморфізму $\eta$ випливає, що $\operatorname{ran}\alpha=\operatorname{dom}\eta$ i $\operatorname{dom}\beta=\operatorname{ran}\eta$. Звідси от\-ри\-му\-ємо, що $\operatorname{ran}(\beta^{-1})=\operatorname{dom}(\eta^{-1})= \operatorname{dom}\alpha$, а отже $\operatorname{ran}(\beta^{-1}\eta^{-1}\alpha)= \operatorname{ran}\alpha$ і $\operatorname{dom}(\beta^{-1}\eta^{-1}\alpha)= \operatorname{ran}\beta$. Таким чином, елементи $\gamma=\eta$ i $\delta=\beta^{-1}\eta^{-1}\alpha$~--- шукані.
\end{proof}

Наступне твердження випливає з твердження~\ref{proposition-2.3} та леми~\ref{lemma-2.4}, і воно описує двобічні ідеали напівгрупи $\mathscr{P\!\!H}^+_{\!\!\operatorname{\textsf{cf}}}\!(\mathbb{R})$.

\begin{proposition}\label{proposition-2.5}
Для довільного невід'ємного цілого числа $n$ множина
\begin{equation*}
I_n= \{\alpha\in \mathscr{P\!\!H}^+_{\!\!\operatorname{\textsf{cf}}}\!(\mathbb{R}) \mid
|\mathbb{R}\setminus\operatorname{dom}\alpha|\geqslant n\}
\end{equation*}
є двобічним ідеалом в
$\mathscr{P\!\!H}^+_{\!\!\operatorname{\textsf{cf}}}\!(\mathbb{R})$. Більше того, $I_0=\mathscr{P\!\!H}^+_{\!\!\operatorname{\textsf{cf}}}\!(\mathbb{R})$ i для кожного власного двобічного ідеалу $I$ напівгрупи $\mathscr{P\!\!H}^+_{\!\!\operatorname{\textsf{cf}}}\!(\mathbb{R})$ існує натуральне число $n$ таке, що ідеали $I$ та $I_n$ є ізоморфними.
\end{proposition}

Якщо $S$ --- напівгрупа, то через $\mathscr{R}$, $\mathscr{L}$, $\mathscr{D}$, $\mathscr{H}$ і $\mathscr{J}$ позначаються відношення Гріна на $S$ (див.
\cite[\S2.1]{CliffordPreston1961-1967}):
\begin{align*}
    &\qquad a\mathscr{R}b \quad\mbox{ тоді і лише тоді, коли }\quad aS^1=bS^1;\\
    &\qquad a\mathscr{L}b \quad\mbox{ тоді і лише тоді, коли }\quad S^1a=S^1b;\\
    &\qquad \mathscr{D}=\mathscr{L}\circ\mathscr{R}=\mathscr{R}\circ\mathscr{L};\\
    &\qquad \mathscr{H}=\mathscr{L}\cap\mathscr{R};\\
    &\qquad a\mathscr{J}b \quad\mbox{ тоді і лише тоді, коли }\quad S^1aS^1=S^1bS^1.
\end{align*}

Наступна теорема описує відношення Гріна на напівгрупі $\mathscr{P\!\!H}^+_{\!\!\operatorname{\textsf{cf}}}\!(\mathbb{R})$.

\begin{theorem}\label{theorem-2.6}
Нехай $\alpha,\beta\in \mathscr{P\!\!H}^+_{\!\!\operatorname{\textsf{cf}}}\!(\mathbb{R})$. Тоді:
\begin{itemize}
  \item[$(i)$] $\alpha\mathscr{R}\beta$ тоді і лише тоді, коли $\operatorname{dom}\alpha=\operatorname{dom}\beta$;
  \item[$(ii)$] $\alpha\mathscr{L}\beta$ тоді і лише тоді, коли $\operatorname{ran}\alpha=\operatorname{ran}\beta$;
  \item[$(iii)$] $\alpha\mathscr{H}\beta$ тоді і лише тоді, коли $\operatorname{dom}\alpha=\operatorname{dom}\beta$ i $\operatorname{ran}\alpha=\operatorname{ran}\beta$;
  \item[$(iv)$] $\alpha\mathscr{D}\beta$ тоді і лише тоді, коли $|\mathbb{R}\setminus\operatorname{dom}\alpha|= |\mathbb{R}\setminus\operatorname{dom}\beta|$;
  \item[$(v)$] $\mathscr{J}=\mathscr{D}$ в $\mathscr{P\!\!H}^+_{\!\!\operatorname{\textsf{cf}}}\!(\mathbb{R})$.
\end{itemize}
\end{theorem}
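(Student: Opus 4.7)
The plan is to dispatch the five parts in the natural order $(i)\Rightarrow(ii)\Rightarrow(iii)\Rightarrow(iv)\Rightarrow(v)$, leaning heavily on the inverse-semigroup structure guaranteed by Proposition~\ref{proposition-1.1} and on the factorisation and ideal results established in Lemma~\ref{lemma-2.4} and Proposition~\ref{proposition-2.5}.

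For $(i)$, since $\mathscr{P\!\!H}^+_{\!\!\operatorname{\textsf{cf}}}\!(\mathbb{R})$ is inverse, I use the classical criterion $\alpha\mathscr{R}\beta\Leftrightarrow\alpha\alpha^{-1}=\beta\beta^{-1}$. By Proposition~\ref{proposition-2.1}$(i)$ the idempotent $\alpha\alpha^{-1}$ is simply the identity map on $\operatorname{dom}\alpha$, so the equality of these idempotents is precisely the equality of domains. For concreteness I would also give the direct verification: if $\operatorname{dom}\alpha=\operatorname{dom}\beta$ then $\alpha=\beta\cdot(\beta^{-1}\alpha)$ and $\beta=\alpha\cdot(\alpha^{-1}\beta)$, showing $\alpha\mathscr{R}\beta$. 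Part $(ii)$ is the left-right dual, using $\alpha^{-1}\alpha$ instead, and $(iii)$ is immediate from $\mathscr{H}=\mathscr{L}\cap\mathscr{R}$.

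For $(iv)$, the necessity is easy: if $\alpha\mathscr{D}\beta$ pick $\gamma$ with $\alpha\mathscr{R}\gamma\mathscr{L}\beta$; then $\operatorname{dom}\alpha=\operatorname{dom}\gamma$ and $\operatorname{ran}\gamma=\operatorname{ran}\beta$, and Proposition~\ref{proposition-2.2} gives $|\mathbb{R}\setminus\operatorname{dom}\alpha|=|\mathbb{R}\setminus\operatorname{dom}\gamma|=|\mathbb{R}\setminus\operatorname{ran}\gamma|=|\mathbb{R}\setminus\operatorname{ran}\beta|=|\mathbb{R}\setminus\operatorname{dom}\beta|$. For sufficiency, assume the two cardinalities coincide. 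The task is to produce $\gamma\in\mathscr{P\!\!H}^+_{\!\!\operatorname{\textsf{cf}}}\!(\mathbb{R})$ with $\operatorname{dom}\gamma=\operatorname{dom}\alpha$ and $\operatorname{ran}\gamma=\operatorname{ran}\beta$; then $\alpha\mathscr{R}\gamma\mathscr{L}\beta$ by $(i)$ and $(ii)$. I build $\gamma$ by the same piecewise-monotone gluing used inside the proof of Lemma~\ref{lemma-2.4}: enumerate the finite complements $\mathbb{R}\setminus\operatorname{dom}\alpha=\{a_1<\dots<a_n\}$ and $\mathbb{R}\setminus\operatorname{ran}\beta=\{c_1<\dots<c_n\}$ and take order-preserving homeomorphisms between corresponding open intervals; their union is the desired $\gamma$.

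Part $(v)$ is the main obstacle, because $\mathscr{D}\subseteq\mathscr{J}$ is automatic but the reverse inclusion needs control of the principal two-sided ideal $S^1\alpha S^1$. I will prove that if $n=|\mathbb{R}\setminus\operatorname{dom}\beta|$ then $S^1\beta S^1=I_n$, after which Proposition~\ref{proposition-2.5} (the chain $I_0\supsetneq I_1\supsetneq\dots$) forces $\alpha\mathscr{J}\beta$ to imply $|\mathbb{R}\setminus\operatorname{dom}\alpha|=n$, hence $\alpha\mathscr{D}\beta$ by $(iv)$. The inclusion $S^1\beta S^1\subseteq I_n$ follows from Proposition~\ref{proposition-2.3}. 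For the reverse inclusion, pick any $\alpha\in I_n$, so $m:=|\mathbb{R}\setminus\operatorname{dom}\alpha|\geqslant n$. Choose an idempotent $\varepsilon\in E(\mathscr{P\!\!H}^+_{\!\!\operatorname{\textsf{cf}}}\!(\mathbb{R}))$ with $\operatorname{dom}\varepsilon\subseteq\operatorname{dom}\beta$ and $|\mathbb{R}\setminus\operatorname{dom}\varepsilon|=m$ (possible because the semilattice $E(\mathscr{P\!\!H}^+_{\!\!\operatorname{\textsf{cf}}}\!(\mathbb{R}))$ realises every finite subset of $\mathbb{R}$ by Proposition~\ref{proposition-2.1}$(ii)$); then $\varepsilon\beta$ has co-domain complement of size $m$, and Lemma~\ref{lemma-2.4} applied to $\alpha$ and $\varepsilon\beta$ yields $\gamma,\delta$ with $\alpha=\gamma(\varepsilon\beta)\delta=(\gamma\varepsilon)\beta\delta\in S^1\beta S^1$. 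This completes $(v)$ and the theorem.
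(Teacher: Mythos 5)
Your proposal is correct and follows essentially the same route as the paper: parts $(i)$--$(iii)$ via the idempotents $\alpha\alpha^{-1}$, $\alpha^{-1}\alpha$ in the inverse semigroup, part $(iv)$ via Proposition~\ref{proposition-2.2} and the same piecewise gluing of increasing homeomorphisms between consecutive complementary intervals, and part $(v)$ from Lemma~\ref{lemma-2.4} together with Proposition~\ref{proposition-2.5}. The only difference is that for $(v)$ the paper merely cites these results, whereas you spell out the identification $S^1\beta S^1=I_n$ (your idempotent trick for elements of $I_n$ with strictly larger complement is a correct completion of that sketch).
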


\begin{proof}
$(i)$ Нехай $\alpha$ i $\beta$~--- елементи напівгрупи $\mathscr{P\!\!H}^+_{\!\!\operatorname{\textsf{cf}}}\!(\mathbb{R})$ такі, що
$\alpha\mathscr{R}\beta$. Позаяк $\mathscr{P\!\!H}^+_{\!\!\operatorname{\textsf{cf}}}\!(\mathbb{R})$~--- інверсна напівгрупа і $\alpha\mathscr{P\!\!H}^+_{\!\!\operatorname{\textsf{cf}}}\!(\mathbb{R})=
\beta\mathscr{P\!\!H}^+_{\!\!\operatorname{\textsf{cf}}}\!(\mathbb{R})$, то з теореми~1.17
\cite{CliffordPreston1961-1967} ви\-пли\-ває, що $\alpha\mathscr{P\!\!H}^+_{\!\!\operatorname{\textsf{cf}}}\!(\mathbb{R})=
\alpha\alpha^{-1}\mathscr{P\!\!H}^+_{\!\!\operatorname{\textsf{cf}}}\!(\mathbb{R})$ і
$\beta\mathscr{P\!\!H}^+_{\!\!\operatorname{\textsf{cf}}}\!(\mathbb{R})=
\beta\beta^{-1}\mathscr{P\!\!H}^+_{\!\!\operatorname{\textsf{cf}}}\!(\mathbb{R})$, а отже $\alpha\alpha^{-1}=\beta\beta^{-1}$. Таким чином, виконується рівність
$\operatorname{dom}\alpha=\operatorname{dom}\beta$.

Навпаки, нехай $\alpha$ i $\beta$~--- елементи напівгрупи $\mathscr{P\!\!H}^+_{\!\!\operatorname{\textsf{cf}}}\!(\mathbb{R})$ такі, що $\operatorname{dom}\alpha=\operatorname{dom}\beta$. Тоді
$\alpha\alpha^{-1}=\beta\beta^{-1}$. Позаяк
$\mathscr{P\!\!H}^+_{\!\!\operatorname{\textsf{cf}}}\!(\mathbb{R})$~--- інверсна напівгрупа, то з теореми~1.17
\cite{CliffordPreston1961-1967} випливає, що  $\alpha\mathscr{P\!\!H}^+_{\!\!\operatorname{\textsf{cf}}}\!(\mathbb{R})=
\alpha\alpha^{-1}\mathscr{P\!\!H}^+_{\!\!\operatorname{\textsf{cf}}}\!(\mathbb{R})=
\beta\mathscr{P\!\!H}^+_{\!\!\operatorname{\textsf{cf}}}\!(\mathbb{R})$, а отже виконується рівність
$\alpha\mathscr{P\!\!H}^+_{\!\!\operatorname{\textsf{cf}}}\!(\mathbb{R})=
\beta\mathscr{P\!\!H}^+_{\!\!\operatorname{\textsf{cf}}}\!(\mathbb{R})$.

Доведення $(ii)$ аналогічне доведенню пункту $(i)$.

Пункт $(iii)$ випливає з $(i)$ та $(ii)$.

$(iv)$ Нехай $\alpha$ i $\beta$~--- елементи напівгрупи $\mathscr{P\!\!H}^+_{\!\!\operatorname{\textsf{cf}}}\!(\mathbb{R})$ такі, що
$\alpha\mathscr{D}\beta$. Тоді існує елемент $\gamma$ напівгрупи $\mathscr{P\!\!H}^+_{\!\!\operatorname{\textsf{cf}}}\!(\mathbb{R})$ такий, що $\alpha\mathscr{R}\gamma$ i $\gamma\mathscr{L}\beta$. За твердженнями~$(i)$ та $(ii)$ маємо, що $\operatorname{dom}\alpha=\operatorname{dom}\gamma$ та $\operatorname{ran}\gamma=\operatorname{ran}\beta$. Тоді, використавши твердження~\ref{proposition-2.2}, отримуємо
\begin{equation*}
|\mathbb{R}\setminus\operatorname{dom}\alpha|= |\mathbb{R}\setminus\operatorname{dom}\gamma|= |\mathbb{R}\setminus\operatorname{ran}\gamma|= |\mathbb{R}\setminus\operatorname{ran}\beta|= |\mathbb{R}\setminus\operatorname{dom}\beta|.
\end{equation*}

Навпаки, нехай $\alpha$ i $\beta$~--- елементи напівгрупи $\mathscr{P\!\!H}^+_{\!\!\operatorname{\textsf{cf}}}\!(\mathbb{R})$ такі, що $|\mathbb{R}\setminus\operatorname{dom}\alpha|= |\mathbb{R}\setminus\operatorname{dom}\beta|=n$, де $n$~--- деяке невід'ємне ціле число. У випадку $n=0$ маємо, що $\alpha$ i $\beta$~--- елементи групи одиниць напівгрупи $\mathscr{P\!\!H}^+_{\!\!\operatorname{\textsf{cf}}}\!(\mathbb{R})$, тобто $\alpha$ i $\beta$ є гомеоморфізмами простору $\mathbb{R}$, а отже вони є $\mathscr{D}$-еквівалентними, оскільки $\alpha\mathscr{H}\beta$. Тому, надалі будемо вважати, що $n>0$. Нехай $\mathbb{R}\setminus\operatorname{dom}\alpha=\{a_1,\ldots,a_n\}$ i $\mathbb{R}\setminus\operatorname{dom}\beta=\{b_1,\ldots,b_n\}$, причому $a_1<\ldots<a_n$ i $b_1<\ldots<b_n$. Через $\gamma_0$ позначимо довільний гомеоморфізм інтервала $(-\infty,a_1)$ на інтервал $(-\infty,b_1)$, через $\gamma_n$ позначимо довільний гомеоморфізм інтервала $(a_n,+\infty)$ на інтервал $(b_n,+\infty)$, і для довільного натурального числа $i=1,\ldots,n-1$ --- через $\gamma_i$ позначимо довільний гомеоморфізм інтервала $(a_i,a_{i+1})$ на інтервал $(b_i,b_{i+1})$. Такі гомеоморфізми (навіть як лінійні відображення) існують, оскільки кожні такі два інтервали попарно гомеоморфні \cite{Engelking1989}. Покладемо $\gamma=\gamma_0\cup\gamma_1\cup\cdots\cup\gamma_n$. Тоді, очевидно, що $\gamma\in \mathscr{P\!\!H}^+_{\!\!\operatorname{\textsf{cf}}}\!(\mathbb{R})$, $\alpha\mathscr{R}\gamma$ i $\gamma\mathscr{L}\beta$, а отже $\alpha\mathscr{D}\beta$.

$(v)$ Включення $\mathscr{D}\subseteq\mathscr{J}$ випливає з означень відношень Гріна $\mathscr{D}$ і $\mathscr{J}$. Обернене включення випливає з пункту $(iv)$, леми~\ref{lemma-2.4} та твердження~\ref{proposition-2.5}.
\end{proof}

\begin{lemma}\label{lemma-2.7}
Для довільного елемента $\alpha$ напівгрупи $\mathscr{P\!\!H}^+_{\!\!\operatorname{\textsf{cf}}}\!(\mathbb{R})$ існує єдиний елемент $\gamma_\alpha$ групи одиниць $H(1)$ такий, що $\alpha=\alpha\alpha^{-1}\gamma_\alpha= \gamma_\alpha\alpha^{-1}\alpha$.
\end{lemma}

\begin{proof}
У випадку $\alpha\in H(1)$ твердження леми є очевидним, а тому далі вва\-жа\-ти\-ме\-мо, що $\alpha\notin H(1)$. Нехай $\operatorname{dom}\alpha=\mathbb{R}\setminus\{x_1,\ldots,x_n\}$ i $\operatorname{ran}\alpha=\mathbb{R}\setminus\{y_1,\ldots,y_n\}$, причому $x_1<\ldots<x_n$ i $y_1<\ldots<y_n$ в $\mathbb{R}$. Означимо відображення $\gamma_\alpha\colon\mathbb{R}\to\mathbb{R}$ за формулою
\begin{equation}\label{gamma_alpha}
    (x)\gamma_\alpha=
\left\{
  \begin{array}{cl}
    (x)\alpha, & \hbox{якщо~}~ x\in\operatorname{dom}\alpha; \\
    y_1,       & \hbox{якщо~}~ x=x_1; \\
    \vdots     & \qquad \vdots  \\
    y_n,       & \hbox{якщо~}~ x=x_n.
  \end{array}
\right.
\end{equation}
Тоді, очевидно, що так означене відображення $\gamma_\alpha$ є монотонним гомеоморфізмом прос\-то\-ру $\mathbb{R}$, та оскільки $\alpha\alpha^{-1}$ i $\alpha^{-1}\alpha$~--- тотожні відображення множин $\operatorname{dom}\alpha=\mathbb{R}\setminus\{x_1,\ldots,x_n\}$ i $\operatorname{ran}\alpha=\mathbb{R}\setminus\{y_1,\ldots,y_n\}$, відповідно, то отримуємо, що $\alpha=\alpha\alpha^{-1}\gamma_\alpha= \gamma_\alpha\alpha^{-1}\alpha$.

Єдиність елемента $\gamma_\alpha$ випливає з означення напівгрупи $\mathscr{P\!\!H}^+_{\!\!\operatorname{\textsf{cf}}}\!(\mathbb{R})$.
\end{proof}

З леми~\ref{lemma-2.7} випливає

\begin{corollary}\label{corollary-2.8}
$\mathscr{P\!\!H}^+_{\!\!\operatorname{\textsf{cf}}}\!(\mathbb{R})$~--- факторизовна інверсна напівгрупа.
\end{corollary}

\begin{construction}\label{construction-2.9}
Нехай $\varepsilon$~--- довільний, відмінний від одиничного елемента, ідемпотент напівгрупи
$\mathscr{P\!\!H}^+_{\!\!\operatorname{\textsf{cf}}}\!(\mathbb{R})$. Тоді за твердженням~\ref{proposition-2.1}$(i)$ існують дійсні числа $x_1,\ldots,x_n$, $n\in\mathbb{N}$, такі, що $\operatorname{dom}\varepsilon=\operatorname{ran}\varepsilon= \mathbb{R}\setminus\{x_1,\ldots,x_n\}$, $\varepsilon\colon \mathbb{R}\setminus\{x_1,\ldots,x_n\}\to\mathbb{R}\setminus\{x_1,\ldots,x_n\}$~--- тотожне відображення і $x_1<\ldots<x_n$. З теореми~\ref{theorem-2.6}$(iii)$ випливає, що для кожного елементу $\alpha$ $\mathscr{H}$-класу $H(\varepsilon)$, який містить ідемпотент $\varepsilon$, виконується умова $\operatorname{dom}\alpha=\operatorname{ran}\alpha= \mathbb{R}\setminus\{x_1,\ldots,x_n\}$. Позаяк для довільних різних дійсних чисел $a$, $b$ i $c$ інтервали $(a,b)$, $(-\infty,c)$ i $(c,+\infty)$ є гомеоморфними, то надалі для спрощення викладу покладемо $x_0=-\infty$ i $x_{n+1}=+\infty$. Для довільного числа $i\in\{0,1,\ldots,n\}$ означимо часткове відображення $\alpha^{(x_i,x_{i+1})}\colon \mathbb{R}\to\mathbb{R}$ наступним чином: $\operatorname{dom}\alpha^{(x_i,x_{i+1})}=\operatorname{ran}\alpha^{(x_i,x_{i+1})}= \mathbb{R}\setminus\{x_1,\ldots,x_n\}$ і
\begin{equation*}
(x)\alpha^{(x_i,x_{i+1})}=
\left\{
  \begin{array}{cl}
    (x)\alpha, & \hbox{якщо~}~ x\in (x_i,x_{i+1});\\
    x, & \hbox{якщо~}~ x\notin (x_i,x_{i+1}).
  \end{array}
\right.
\end{equation*}
Тоді очевидно, що $\alpha^{(x_i,x_{i+1})}\in \mathscr{P\!\!H}^+_{\!\!\operatorname{\textsf{cf}}}\!(\mathbb{R})$.
\end{construction}

В термінах попередньої конструкції зробимо наступне зауваження.

\begin{remark}\label{remark-2.10}
\begin{itemize}
  \item[$(i)$] За теоремою~\ref{theorem-2.6}$(iii)$ маємо, що $\alpha^{(x_i,x_{i+1})}\in H(\varepsilon)$ для довільних $\alpha\in H(\varepsilon)$ та $i\in\{0,1,\ldots,n\}$.
  \item[$(ii)$] Якщо $\alpha$~--- ідемпотент, то $\alpha^{(x_i,x_{i+1})}=\alpha$, для довільного числа $i\in\{0,1,\ldots,n\}$.
  \item[$(iii)$] $\alpha=\alpha^{(x_0,x_{1})}\cdot\ldots \cdot\alpha^{(x_n,x_{n+1})}$ для довільного елемента $\alpha\in H(\varepsilon)$, причому це зображення єдине з точністю до перестановки множників і множення на ідемпотент $\varepsilon_0\geqslant\varepsilon$ напівгрупи $\mathscr{P\!\!H}^+_{\!\!\operatorname{\textsf{cf}}}\!(\mathbb{R})$.
  \item[$(iv)$] Оскільки довільний інтервал дійсної прямої гомеоморфний цій прямій, то для довільного числа $i\in\{0,1,\ldots,n\}$ підмножина $\left\{\alpha^{(x_i,x_{i+1})}\colon \alpha\in H(\varepsilon)\right\}$ з індукованою з $\mathscr{P\!\!H}^+_{\!\!\operatorname{\textsf{cf}}}\!(\mathbb{R})$ бінарною операцією є групою, яка ізоморфна групі $\mathscr{H}^+\!(\mathbb{R})$.
\end{itemize}
\end{remark}

Наслідок~\ref{corollary-2.11} описує структуру максимальних підгруп напівгрупи $\mathscr{P\!\!H}^+_{\!\!\operatorname{\textsf{cf}}}\!(\mathbb{R})$ і він випливає з пунктів $(iii)$ та $(iv)$ зауваження~\ref{remark-2.10}.

\begin{corollary}\label{corollary-2.11}
Нехай $\varepsilon$~--- неодиничний ідемпотент напівгрупи $\mathscr{P\!\!H}^+_{\!\!\operatorname{\textsf{cf}}}\!(\mathbb{R})$. Тоді $\mathscr{H}$-клас $H(\varepsilon)$, який містить ідемпотент $\varepsilon$, ізоморфний прямому степеню $\left(\mathscr{H}^+\!(\mathbb{R})\right)^{n+1}$, де $n=|\mathbb{R}\setminus\operatorname{dom}\varepsilon|$.
\end{corollary}

Виконується наступне твердження:

\begin{proposition}\label{propositiony-2.12}
Кожен елемент $\alpha$ напівгрупи $\mathscr{P\!\!H}^+_{\!\!\operatorname{\textsf{cf}}}\!(\mathbb{R})$ стосовно природного част\-ко\-во\-го порядку порівняльний з єдиним елементом $\gamma_\alpha$ групи одиниць $H(1)$.
\end{proposition}

\begin{proof}
З леми~\ref{lemma-2.7} і леми~1.4.6~\cite{Lawson1998} випливає, що для довільного елемента $\alpha$ на\-пів\-гру\-пи $\mathscr{P\!\!H}^+_{\!\!\operatorname{\textsf{cf}}}\!(\mathbb{R})$ існує елемент $\gamma_\alpha$ групи одиниць $H(1)$ такий, що $\alpha\leqslant\gamma_\alpha$.
Припустимо про\-ти\-леж\-не до твердження: існує елемент $\alpha$ напівгрупи $\mathscr{P\!\!H}^+_{\!\!\operatorname{\textsf{cf}}}\!(\mathbb{R})$ такий, що $\alpha\leqslant\gamma_\alpha$ i $\alpha\leqslant\gamma_\alpha^{\ast}$ для двох різних $\gamma_\alpha,\gamma_\alpha^{\ast}\in H(1)$. З твердження~1.4.10~\cite{Lawson1998} випливає, що $\alpha\notin H(1)$. Тоді існує дійсне число $x$ таке, що $(x)\gamma_\alpha\neq(x)\gamma_\alpha^{\ast}$. З означення напівгрупи $\mathscr{P\!\!H}^+_{\!\!\operatorname{\textsf{cf}}}\!(\mathbb{R})$ випливає, що її група одиниць $H(1)$ ізоморфна групі $\mathscr{H}^+\!(\mathbb{R})$ усіх гомеоморфізмів, що зберігають орієнтацію простору $\mathbb{R}$, а отже за гаусдорфовістю простору $\mathbb{R}$ отримуємо, що існує дійсне число $a>0$ таке, що $(y)\gamma_\alpha\neq(y)\gamma_\alpha^{\ast}$ для всіх $y\in(x-a,x+a)$. Тоді з твердження~\ref{proposition-2.1}$(i)$ випливає, що $\varepsilon\gamma_\alpha\neq\varepsilon\gamma_\alpha^{\ast}$ для довільного ідемпотента $\varepsilon$ напівгрупи $\mathscr{P\!\!H}^+_{\!\!\operatorname{\textsf{cf}}}\!(\mathbb{R})$, а це суперечить тому, що $\alpha\leqslant\gamma_\alpha$. З отриманого протиріччя випливає єдиність елемента $\gamma_\alpha$.
\end{proof}

Найменша групова конгруенція $\mathfrak{C}_{\textsf{mg}}$ на інверсній напівгрупі $S$ визначається на\-ступ\-ним чином (див. \cite[III.5]{Petrich1984}):
\begin{equation*}
    s\mathfrak{C}_{\textsf{mg}}t \hbox{~в~} S \quad \hbox{~тоді і лише тоді, коли існує ідемпотент~} \quad  e\in S \quad  \hbox{~такий, що~} \quad  es=et.
\end{equation*}

За твердженням~\ref{propositiony-2.12} для довільного елемента $\alpha$ напівгрупи $\mathscr{P\!\!H}^+_{\!\!\operatorname{\textsf{cf}}}\!(\mathbb{R})$ існує єдиний елемент $\gamma_\alpha$ групи одиниць $H(1)$ такий, що $\alpha\leqslant\gamma_\alpha$, а отже визначено відображення
\begin{equation}\label{eq-Gamma}
 \mathfrak{G}\colon \mathscr{P\!\!H}^+_{\!\!\operatorname{\textsf{cf}}}\!(\mathbb{R})\rightarrow H(1)\colon\alpha\mapsto\gamma_\alpha.
\end{equation}
Також, з тверджень~1.4.7~\cite{Lawson1998} і~\ref{propositiony-2.12} випливає, що так означене відображення $\mathfrak{G}$ є сюр'єктивним гомоморфізмом, і більше того  для $\alpha,\beta\in\mathscr{P\!\!H}^+_{\!\!\operatorname{\textsf{cf}}}\!(\mathbb{R})$ маємо, що
\begin{equation*}
    \alpha\mathfrak{C}_{\textsf{mg}}\beta  \quad \hbox{~тоді і лише тоді, коли~} \quad  (\alpha) \mathfrak{G}=(\beta) \mathfrak{G}.
\end{equation*}

Таким чином, нами доведена наступна теорема:

\begin{theorem}\label{theorem-2.13}
Фактор-напівгрупа $\mathscr{P\!\!H}^+_{\!\!\operatorname{\textsf{cf}}}\!(\mathbb{R})/\mathfrak{C}_{\textsf{mg}}$ ізоморфна групі $\mathscr{H}^+\!(\mathbb{R})$ усіх го\-мео\-мор\-фіз\-мів, що зберігають орієнтацію простору $\mathbb{R}$, причому природний гомоморфізм $\mathfrak{C}_{\textsf{mg}}^{\natural}\colon \mathscr{P\!\!H}^+_{\!\!\operatorname{\textsf{cf}}}\!(\mathbb{R})\rightarrow \mathscr{H}^+\!(\mathbb{R})$ визначається за формулою $($\ref{eq-Gamma}$)$.
\end{theorem}

Наступне твердження трішки ``посилює'' властивості напівгрупи $\mathscr{P\!\!H}^+_{\!\!\operatorname{\textsf{cf}}}\!(\mathbb{R})$ за модулем леми~\ref{lemma-2.7}.

\begin{proposition}\label{proposition-2.14}
Для довільного елемента $\alpha$ напівгрупи $\mathscr{P\!\!H}^+_{\!\!\operatorname{\textsf{cf}}}\!(\mathbb{R})$ існує єдиний еле\-мент $\gamma_\alpha$ групи одиниць $H(1)$ напівгрупи $\mathscr{P\!\!H}^+_{\!\!\operatorname{\textsf{cf}}}\!(\mathbb{R})$ такий, що
\begin{equation}\label{eq-2}
    \alpha\alpha^{-1}=\alpha\gamma_\alpha^{-1} \qquad \hbox{i} \qquad \alpha^{-1}\alpha=\gamma_\alpha^{-1}\alpha.
\end{equation}
\end{proposition}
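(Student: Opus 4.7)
The plan is to derive Proposition~\ref{proposition-2.14} directly from Lemma~\ref{lemma-2.7}, which already supplies, for each $\alpha\in\mathscr{P\!\!H}^+_{\!\!\operatorname{\textsf{cf}}}\!(\mathbb{R})$, a unique element $\gamma_\alpha\in H(1)$ satisfying $\alpha=\alpha\alpha^{-1}\gamma_\alpha=\gamma_\alpha\alpha^{-1}\alpha$. The key algebraic observation I will rely on is that $H(1)$ is the group of units of the monoid $\mathscr{P\!\!H}^+_{\!\!\operatorname{\textsf{cf}}}\!(\mathbb{R})$, so its group identity coincides with the semigroup identity $1$, and the group-inverse of $\gamma_\alpha$ in $H(1)$ agrees with its inverse-semigroup inverse in $\mathscr{P\!\!H}^+_{\!\!\operatorname{\textsf{cf}}}\!(\mathbb{R})$; in particular $\gamma_\alpha\gamma_\alpha^{-1}=\gamma_\alpha^{-1}\gamma_\alpha=1$.

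For the existence of the identities (\ref{eq-2}), I would simply multiply the first equation $\alpha=\alpha\alpha^{-1}\gamma_\alpha$ on the right by $\gamma_\alpha^{-1}$, obtaining
\begin{equation*}
\alpha\gamma_\alpha^{-1}=\alpha\alpha^{-1}\gamma_\alpha\gamma_\alpha^{-1}=\alpha\alpha^{-1}\cdot 1=\alpha\alpha^{-1}.
\end{equation*}
Symmetrically, multiplying $\alpha=\gamma_\alpha\alpha^{-1}\alpha$ on the left by $\gamma_\alpha^{-1}$ yields $\gamma_\alpha^{-1}\alpha=\alpha^{-1}\alpha$. These are precisely the two identities of~(\ref{eq-2}), so the $\gamma_\alpha$ produced by Lemma~\ref{lemma-2.7} has the required property.

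For uniqueness, I would take an arbitrary $g\in H(1)$ satisfying $\alpha\alpha^{-1}=\alpha g^{-1}$ and $\alpha^{-1}\alpha=g^{-1}\alpha$, and reverse the previous step. Multiplying the first equality on the right by $g$ and the second on the left by $g$, and using $gg^{-1}=g^{-1}g=1$ together with the inverse-semigroup identities $\alpha\alpha^{-1}\alpha=\alpha$, I get $\alpha\alpha^{-1}g=\alpha=g\alpha^{-1}\alpha$. Thus $g$ satisfies the defining relations of Lemma~\ref{lemma-2.7}, and its uniqueness clause forces $g=\gamma_\alpha$.

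I do not anticipate any substantive obstacle: the argument is a two-line manipulation using only the inverse-semigroup axioms and the fact that $H(1)$ is a group. The one point worth checking carefully, which I would make explicit in the write-up, is that the element $\gamma_\alpha^{-1}$ appearing on both sides of (\ref{eq-2}) really is the same element of $H(1)$ (as opposed to distinct one-sided inverses)\,---\,but this is automatic once $H(1)$ is recognized as a group, as already noted in Section~1 of the paper.
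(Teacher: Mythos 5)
Your proof is correct, and it takes a genuinely different route from the paper's. The paper proves Proposition~\ref{proposition-2.14} by repeating the explicit construction from the proof of Lemma~\ref{lemma-2.7}: it defines $\gamma_\alpha$ by formula~(\ref{gamma_alpha}) (extending $\alpha$ by $(x_i)\gamma_\alpha=y_i$), checks directly that this is an increasing homeomorphism of $\mathbb{R}$ satisfying~(\ref{eq-2}), and obtains uniqueness from a topological argument: any homeomorphism $\beta$ of $\mathbb{R}$ whose restriction to the co-finite (hence dense) set $\operatorname{dom}\alpha$ coincides with $\alpha$ must send each $x_i$ to $y_i$, so the continuous extension is unique. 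You instead treat Lemma~\ref{lemma-2.7} as a black box and show that its defining identities $\alpha=\alpha\alpha^{-1}\gamma_\alpha=\gamma_\alpha\alpha^{-1}\alpha$ are formally equivalent to~(\ref{eq-2}), by multiplying on the appropriate side by $\gamma_\alpha^{\pm1}$ and using $\gamma_\alpha\gamma_\alpha^{-1}=\gamma_\alpha^{-1}\gamma_\alpha=1$; uniqueness then transfers automatically from the uniqueness clause of the lemma. Both arguments are sound. Yours is shorter, redoes no topology, and makes visible that the proposition is a purely formal consequence of Lemma~\ref{lemma-2.7}, valid in any inverse monoid where that lemma's conclusion holds (in particular, the element of $H(1)$ above $\alpha$ in the natural partial order is the same in both formulations); the paper's version has the minor advantage of exhibiting $\gamma_\alpha$ concretely once more. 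The one point you rightly flag --- that the group inverse of $\gamma_\alpha$ in $H(1)$ coincides with its inverse-semigroup inverse --- is the only thing needing a word, and it is standard for the $\mathscr{H}$-class of the identity of an inverse monoid.
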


\begin{proof}
Для елемента $\alpha$ напівгрупи $\mathscr{P\!\!H}^+_{\!\!\operatorname{\textsf{cf}}}\!(\mathbb{R})$ означимо гомеоморфізм $\widetilde{\alpha}$ простору $\mathbb{R}$ на\-ступ\-ним чином. Якщо $\alpha$ --- елемент групи одиниць $H(1)$ напівгрупи $\mathscr{P\!\!H}^+_{\!\!\operatorname{\textsf{cf}}}\!(\mathbb{R})$, то покладемо $\gamma_\alpha=\alpha$.

Нехай $\alpha\notin H(1)$ i $|\mathbb{R}\setminus\operatorname{dom}\alpha|= |\mathbb{R}\setminus\operatorname{ran}\alpha|=n\geqslant 1$. Тоді існують дійсні числа $x_1,\ldots,x_n,y_1,\ldots,y_n$ такі, що $\operatorname{dom}\alpha= \mathbb{R}\setminus\{x_1,\ldots,x_n\}$, $\operatorname{ran}\alpha=\mathbb{R}\setminus\{y_1,\ldots,y_n\}$, $x_1<\ldots<x_n$ i $y_1<\ldots<y_n$. Означимо відображення $\gamma_\alpha\colon\mathbb{R}\rightarrow\mathbb{R}$ за формулою (\ref{gamma_alpha}). Очевидно, що так означене відображення $\gamma_\alpha$ є монотонним гомеоморфізмом простору $\mathbb{R}$ та елемент $\gamma_\alpha$ групи одиниць $H(1)$ напівгрупи $\mathscr{P\!\!H}^+_{\!\!\operatorname{\textsf{cf}}}\!(\mathbb{R})$ задовольняє умову (\ref{eq-2}). Також з неперервності відображення $\alpha\colon\operatorname{dom}\alpha\rightarrow \operatorname{ran}\alpha$ та ко-скінченності множин $\operatorname{dom}\alpha$ та $\operatorname{ran}\alpha$ в $\mathbb{R}$ випливає, що $(x_1)\beta=y_1,\ldots,(x_n)\beta=y_n$, для довільного гомеоморфізму $\beta$ простору $\mathbb{R}$ такого, що звуження $\beta|_{\operatorname{dom}\alpha}$ збігається з частковим відображенням $\alpha$. Таким чином, так побудований гомеоморфізм $\gamma_\alpha\colon\mathbb{R}\rightarrow\mathbb{R}$ єдиний, що задовольняє умову (\ref{eq-2}).
\end{proof}

Далі ми опишемо додаткові властивості $\mathscr{D}$-еквівалентних елементів у напівгрупі\break $\mathscr{P\!\!H}^+_{\!\!\operatorname{\textsf{cf}}}\!(\mathbb{R})$, які пов'язані з її групою одиниць.

\begin{lemma}\label{lemma-2.15}
Для довільних  $\mathscr{D}$-еквівалентних ідемпотентів $\iota$ та $\varepsilon$ напівгрупи $\mathscr{P\!\!H}^+_{\!\!\operatorname{\textsf{cf}}}\!(\mathbb{R})$ існує елемент $\gamma_{\iota,\varepsilon}$ групи одиниць $H(1)$ напівгрупи $\mathscr{P\!\!H}^+_{\!\!\operatorname{\textsf{cf}}}\!(\mathbb{R})$ такий, що $\iota=\gamma_{\iota,\varepsilon}\varepsilon\gamma_{\iota,\varepsilon}^{-1}$.
\end{lemma}

\begin{proof}
За теоремою~\ref{theorem-2.6}$(iv)$ маємо, що  $|\mathbb{R}\setminus\operatorname{dom}\iota|= |\mathbb{R}\setminus\operatorname{dom}\varepsilon|$. Покладемо
\begin{equation*}
    \operatorname{dom}\iota=\operatorname{ran}\iota=\mathbb{R}\setminus\{x_1,\ldots,x_n\} \qquad \hbox{i} \qquad \operatorname{dom}\varepsilon=\operatorname{ran}\varepsilon=\mathbb{R}\setminus\{y_1,\ldots,y_n\}.
\end{equation*}
Не зменшуючи загальності, можемо вважати, що $x_1<\ldots<x_n$ i $y_1<\ldots<y_n$. Означимо відображення $\gamma_{\iota,\varepsilon}\colon\mathbb{R} \rightarrow \mathbb{R}$ за формулою
\begin{equation*}
    (x)\gamma_{\iota,\varepsilon}=
\left\{
  \begin{array}{ll}
    x-x_1+y_1, & \hbox{якщо~} x\leqslant x_1;\\
    \dfrac{y_2-y_1}{x_2-x_1}\cdot(x-x_1)+y_1, & \hbox{якщо~} x_1\leqslant x\leqslant x_2;\\
     \qquad \vdots & \qquad \vdots\\
    \dfrac{y_n-y_{n-1}}{x_n-x_{n-1}}\cdot(x-x_{n-1})+y_{n-1}, & \hbox{якщо~} x_{n-1}\leqslant x\leqslant x_n;\\
    x-x_n+y_n, & \hbox{якщо~} x\geqslant x_n.
  \end{array}
\right.
\end{equation*}
Легко бачити, що відображення $\gamma_{\iota,\varepsilon}\colon\mathbb{R} \rightarrow \mathbb{R}$ означене коректно, причому $(x_1)\gamma_{\iota,\varepsilon}=y_1, \ldots, (x_n)\gamma_{\iota,\varepsilon}=y_n$. Позаяк відображення $\gamma_{\iota,\varepsilon}$ є кусково-лінійним, то воно, очевидно, є гомеоморфізмом простору $\mathbb{R}$, а отже $\gamma_{\iota,\varepsilon}$ є елементом групи одиниць $H(1)$ напівгрупи $\mathscr{P\!\!H}^+_{\!\!\operatorname{\textsf{cf}}}\!(\mathbb{R})$. Також з означення відображення $\gamma_{\iota,\varepsilon}$ випливає, що виконується рівність $\iota=\gamma_{\iota,\varepsilon}\varepsilon\gamma_{\iota,\varepsilon}^{-1}$.
\end{proof}

\begin{proposition}\label{proposition-2.16}
Для довільних $\mathscr{D}$-еквівалентних елементів $\alpha$ та $\beta$ напівгрупи\break $\mathscr{P\!\!H}^+_{\!\!\operatorname{\textsf{cf}}}\!(\mathbb{R})$ існують елементи $\gamma$ та $\delta$ групи одиниць $H(1)$ напівгрупи $\mathscr{P\!\!H}^+_{\!\!\operatorname{\textsf{cf}}}\!(\mathbb{R})$ такі, що $\alpha=\gamma\beta\delta$.
\end{proposition}

\begin{proof}
За теоремою~\ref{theorem-2.6}$(iv)$ ідемпотенти $\alpha\alpha^{-1}$ та $\beta\beta^{-1}$ є $\mathscr{D}$-еквівалентними в на\-пів\-групі  $\mathscr{P\!\!H}^+_{\!\!\operatorname{\textsf{cf}}}\!(\mathbb{R})$. Тоді за лемою~\ref{lemma-2.15} існує елемент $\xi$  групи одиниць $H(1)$ напівгрупи $\mathscr{P\!\!H}^+_{\!\!\operatorname{\textsf{cf}}}\!(\mathbb{R})$ такий, що $\alpha\alpha^{-1}=\xi\beta\beta^{-1}\xi^{-1}$. З леми~\ref{lemma-2.7} випливає, що існує елемент групи одиниць $\gamma_\delta$ такий, що $\beta=\gamma_\delta\beta^{-1}\beta$. Тоді отримуємо, що $\beta^{-1}=\beta^{-1}\beta\gamma_\delta^{-1}$, а отже маємо:
\begin{equation*}
    \alpha\alpha^{-1}=\xi\beta\beta^{-1}\xi^{-1}=\xi\beta\beta^{-1}\beta\gamma_\delta^{-1}\xi^{-1}=\xi\beta\gamma_\delta^{-1}\xi^{-1}.
\end{equation*}
За твердженням~\ref{proposition-2.14} існує єдиний елемент $\gamma_\alpha$ групи одиниць $H(1)$ напівгрупи $\mathscr{P\!\!H}^+_{\!\!\operatorname{\textsf{cf}}}\!(\mathbb{R})$ такий, що $\alpha\alpha^{-1}=\alpha\gamma_\alpha^{-1}$. Таким чином, отримуємо:
\begin{equation*}
    \alpha=\alpha\cdot 1=\alpha\cdot\gamma_\alpha^{-1}\gamma_\alpha= \alpha\alpha^{-1}\gamma_\alpha= \xi\beta\gamma_\delta^{-1}\xi^{-1}\gamma_\alpha= \gamma\beta\delta,
\end{equation*}
де, очевидно, що $\gamma=\xi$ i $\delta=\gamma_\delta^{-1}\xi^{-1}\gamma_\alpha$~--- елементи групи одиниць $H(1)$ напівгрупи $\mathscr{P\!\!H}^+_{\!\!\operatorname{\textsf{cf}}}\!(\mathbb{R})$.
\end{proof}

\section{Структурна теорема для напівгрупи $\mathscr{P\!\!H}^+_{\!\!\operatorname{\textsf{cf}}}\!(\mathbb{R})$}

Нагадаємо, що інверсна напівгрупа $S$ називається \emph{$F$-інверсною}, якщо $\mathfrak{C}_{\textsf{mg}}$-клас $s_{\mathfrak{C}_{\textsf{mg}}}$ кожного елемента $s$  має найбільший елемент стосовно природного часткового порядку в $S$ \cite{McFadden-Carroll-1971}. Очевидно, що кожна $F$-інверсна напівгрупа містить одиницю.

З твердження~\ref{propositiony-2.12} випливає

\begin{corollary}\label{corollary-3.1}
$\mathscr{P\!\!H}^+_{\!\!\operatorname{\textsf{cf}}}\!(\mathbb{R})$~--- $F$-інверсна напівгрупа.
\end{corollary}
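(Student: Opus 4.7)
The plan is to reduce the statement directly to Proposition~\ref{propositiony-2.12} together with the characterisation of $\mathfrak{C}_{\textsf{mg}}$ established just before Theorem~\ref{theorem-2.13}, namely $\alpha\mathfrak{C}_{\textsf{mg}}\beta$ iff $(\alpha)\mathfrak{G}=(\beta)\mathfrak{G}$, where $(\alpha)\mathfrak{G}=\gamma_\alpha$ is the unique element of $H(1)$ with $\alpha\leqslant\gamma_\alpha$. By definition of $F$-inverse, it suffices to exhibit, for each $\alpha\in \mathscr{P\!\!H}^+_{\!\!\operatorname{\textsf{cf}}}\!(\mathbb{R})$, a maximum element of the $\mathfrak{C}_{\textsf{mg}}$-class of $\alpha$ with respect to the natural partial order $\leqslant$. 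The candidate is $\gamma_\alpha$ itself.

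The argument splits into two short observations. First, $\gamma_\alpha$ lies in the $\mathfrak{C}_{\textsf{mg}}$-class of $\alpha$: since $\gamma_\alpha\in H(1)$ and trivially $\gamma_\alpha\leqslant\gamma_\alpha$, the uniqueness clause of Proposition~\ref{propositiony-2.12} applied to the element $\gamma_\alpha$ yields $(\gamma_\alpha)\mathfrak{G}=\gamma_\alpha=(\alpha)\mathfrak{G}$, so $\alpha\mathfrak{C}_{\textsf{mg}}\gamma_\alpha$. Second, $\gamma_\alpha$ majorises the whole class: for any $\beta$ with $\alpha\mathfrak{C}_{\textsf{mg}}\beta$, the characterisation gives $\gamma_\beta=(\beta)\mathfrak{G}=(\alpha)\mathfrak{G}=\gamma_\alpha$, and Proposition~\ref{propositiony-2.12} gives $\beta\leqslant\gamma_\beta=\gamma_\alpha$. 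Hence $\gamma_\alpha$ is both a member of and an upper bound for the class, i.e.\ its maximum element.

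There is no genuine obstacle: the technical content has already been settled in Proposition~\ref{propositiony-2.12} (existence and uniqueness of the canonical cover in $H(1)$) and in the remark that $\mathfrak{C}_{\textsf{mg}}$ is precisely the kernel of $\mathfrak{G}$. The only small point one must be careful with is the self-consistency step $\gamma_{\gamma_\alpha}=\gamma_\alpha$, which is immediate but worth writing out, since it is what guarantees the maximum actually belongs to the class rather than merely dominating it from outside.
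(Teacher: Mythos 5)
Your proposal is correct and follows essentially the same route as the paper, which derives the corollary directly from Proposition~\ref{propositiony-2.12} together with the observation that $\mathfrak{C}_{\textsf{mg}}$ is the kernel of the map $\mathfrak{G}\colon\alpha\mapsto\gamma_\alpha$; you have merely written out the details (membership of $\gamma_\alpha$ in the class via $\gamma_{\gamma_\alpha}=\gamma_\alpha$, and domination of the class via $\beta\leqslant\gamma_\beta=\gamma_\alpha$) that the paper leaves implicit.
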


Для довільного елемента $s$ інверсної напівгрупи $S$ позначимо ${\downarrow}s=\{x\in S\colon x\leqslant s\}$, де $\leqslant$~--- природний частковий порядок на $S$.

Нехай $S$~--- довільна $F$-інверсна напівгрупа. Тоді для довільного елемента $s$ на\-пів\-гру\-пи $S$ через $e_s$ позначимо ідемпотент $ss^{-1}\in S$, через $t_s$~--- найбільший елемент стосовно природного часткового порядку в $S$ в $\mathfrak{C}_{\textsf{mg}}$-класі $s_{\mathfrak{C}_{\textsf{mg}}}$ елемента $s$, і нехай $T_S=\{t_s\colon s\in S\}$. Тоді напівгрупа $S$ є диз'юнктним об'єднанням множин ${\downarrow}t$, де $t\in T_S$ \cite{McFadden-Carroll-1971}.

Структура $F$-інверсних напівгруп викладена у наступних твердженнях з праці \cite{McFadden-Carroll-1971}, і ми далі скористаємося ними для описання напівгрупи $\mathscr{P\!\!H}^+_{\!\!\operatorname{\textsf{cf}}}\!(\mathbb{R})$.

\begin{lemma}[{\cite[лема~3]{McFadden-Carroll-1971}}]\label{lemma-3.2}
Нехай $S$~--- $F$-інверсна напівгрупа з одиницею $1_S$. Тоді:
\begin{itemize}
  \item[$(i)$] $1_S$~--- одиниця напівгратки $E(S)$;
  \item[$(ii)$] множина $T_S$ з бінарною операцією
\begin{equation*}
    u\ast v=t_{uv}, \qquad u,v\in T_S,
\end{equation*}
  є групою з нейтральним елементом $1_S$, і $t^{-1}$ є оберненим до елемента $t$ в групі $(T_S,\ast)$;
  \item[$(iii)$] для кожного елемента $t\in T_S$ відображення $\mathfrak{F}_t\colon E(S)\rightarrow {\downarrow}e_t$, означене за формулою
\begin{equation*}
    (f)\mathfrak{F}_t=tft^{-1}, \qquad f\in E(S),
\end{equation*}
   є сюр'єктивним гомоморфізмом, причому $\mathfrak{F}_{1_S}$ є тотожним відображенням на $E(S)$;
   \item[$(iv)$] $(1_S)\mathfrak{F}_t=e_t$ й $(e_t)\mathfrak{F}_{t^{-1}}=e_{t^{-1}}$, для довільного елемента $t\in T_S$;
   \item[$(v)$] для довільних елементів $u,v\in S$ виконується рівність
\begin{equation*}
   \left((1_S)\mathfrak{F}_u\right)\mathfrak{F}_v\cdot (f)\mathfrak{F}_{u\ast v}=\left((f)\mathfrak{F}_u\right)\mathfrak{F}_v, \qquad  \hbox{для довільного ідемпотента} \quad f\in S;
\end{equation*}
   \item[$(vi)$] якщо $u,v\in T_S$, то
\begin{equation*}
    f\cdot(g)\mathfrak{F}_u\leqslant e_{u\ast v},
\end{equation*}
   для всіх ідемпотентів $f\leqslant e_u$ та $g\leqslant e_v$ напівгрупи $S$.
\end{itemize}
\end{lemma}

\begin{theorem}[{\cite[теорема~3]{McFadden-Carroll-1971}}]\label{theorem-3.3}
Нехай $S$~--- $F$-інверсна напівгрупа і $\mathscr{S}=\bigcup_{t\in T_S}\left({\downarrow}e_t\times\{t\}\right)$. Означимо на $\mathscr{S}$ бінарну операцію $\circ$ наступним чином: якщо $u,v\in T_S$, то для ідем\-по\-тен\-тів $f\leqslant e_u$ та $g\leqslant e_v$ покладемо
\begin{equation}\label{eq-circ}
    (f,u)\circ(g,v)=\left(f\cdot(g)\mathfrak{F}_u,u\ast v\right).
\end{equation}
Тоді $\circ$~--- напівгрупова операція на $\mathscr{S}$ і напівгрупа $\left(\mathscr{S},\circ\right)$ ізоморфна напівгрупі $S$ стосовно відображення $\mathfrak{H}\colon S\rightarrow \mathscr{S}\colon s\mapsto\left(ss^{-1},t_s\right)$.
\end{theorem}

Нехай $A$ та $B$~--- напівгрупи, $\operatorname{\textsf{End}}(B)$~--- напівгрупа ендоморфізмів напівгрупи $B$ та визначено гомоморфізм $\mathfrak{h}\colon A\rightarrow \operatorname{\textsf{End}}(B)\colon b\mapsto \mathfrak{h}_b$. Тоді множина $A\times B$ з бінарною операцією
\begin{equation*}
    (a_1,b_1)\cdot(a_2,b_2)=\left(a_1a_2,(b_1)\mathfrak{h}_{a_2}b_2\right), \qquad a_1,a_2\in A, \; b_1,b_2\in B,
\end{equation*}
називається \emph{напівпрямим добутком} напівгрупи $A$ напівгрупою $B$ стосовно гомо\-мор\-фіз\-му $\mathfrak{h}$ і позначається $A\ltimes_\mathfrak{h}B$ \cite{Lawson1998}. У цьому випадку кажуть, що визначена права дія напівгрупи $A$ на напівгрупі $B$ ендоморфізмами (гомоморфізмами). Зауважимо, що напівпрямий добуток інверсних напівгруп не завжди є інверсною напівгрупою (див. \cite[розділ~5.3]{Lawson1998}).

\begin{lemma}\label{lemma-3.4}
Відображення $\mathfrak{h}\colon H(1)\rightarrow\operatorname{\textsf{End}}\left(E(\mathscr{P\!\!H}^+_{\!\!\operatorname{\textsf{cf}}}\!(\mathbb{R}))\right) \colon \gamma\mapsto\mathfrak{h}_\gamma$, де $(\alpha)\mathfrak{h}_\gamma=\gamma^{-1}\alpha\gamma$~--- ав\-то\-мор\-фіз\-м напівгратки $E(\mathscr{P\!\!H}^+_{\!\!\operatorname{\textsf{cf}}}\!(\mathbb{R}))$, є гомоморфізмом, причому $\mathfrak{h}_1$~--- тотожний ав\-то\-мор\-фізм на\-пів\-грат\-ки $E(\mathscr{P\!\!H}^+_{\!\!\operatorname{\textsf{cf}}}\!(\mathbb{R}))$.
\end{lemma}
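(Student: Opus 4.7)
The plan is to verify in turn the three assertions embedded in the lemma: (i) for each fixed $\gamma\in H(1)$, the map $\mathfrak{h}_\gamma$ is an endomorphism of the band $E(\mathscr{P\!\!H}^+_{\!\!\operatorname{\textsf{cf}}}\!(\mathbb{R}))$; (ii) the assignment $\gamma\mapsto\mathfrak{h}_\gamma$ respects multiplication; and (iii) $\mathfrak{h}_1$ is the identity automorphism. Everything will flow from the characterization of idempotents in Proposition~\ref{proposition-2.1}$(i)$ together with the fact that $H(1)=\mathscr{H}^+\!(\mathbb{R})$ is the group of units of $\mathscr{P\!\!H}^+_{\!\!\operatorname{\textsf{cf}}}\!(\mathbb{R})$, so that $\gamma\gamma^{-1}=\gamma^{-1}\gamma=1$ holds for every $\gamma\in H(1)$.

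For (i) I would first check that $(\alpha)\mathfrak{h}_\gamma$ again lies in the band. By Proposition~\ref{proposition-2.1}$(i)$ an element $\alpha\in E(\mathscr{P\!\!H}^+_{\!\!\operatorname{\textsf{cf}}}\!(\mathbb{R}))$ is the partial identity on some cofinite subset $D\subseteq\mathbb{R}$; since $\gamma\colon\mathbb{R}\to\mathbb{R}$ is a total homeomorphism, a direct computation identifies $\gamma^{-1}\alpha\gamma$ with the partial identity on $(D)\gamma$, which is again cofinite because $\gamma$ is a bijection of $\mathbb{R}$, so $(\alpha)\mathfrak{h}_\gamma$ satisfies the criterion of Proposition~\ref{proposition-2.1}$(i)$. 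The semilattice-homomorphism property of $\mathfrak{h}_\gamma$ will then be the one-line identity
\begin{equation*}
    (\alpha\beta)\mathfrak{h}_\gamma=\gamma^{-1}\alpha\beta\gamma= \gamma^{-1}\alpha(\gamma\gamma^{-1})\beta\gamma= (\alpha)\mathfrak{h}_\gamma\cdot(\beta)\mathfrak{h}_\gamma,
\end{equation*}
whose middle step uses precisely that $\gamma$ is a unit.

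For (ii), adopting the right-action convention of the paper, I would simply compute
\begin{equation*}
    (\alpha)(\mathfrak{h}_{\gamma_1}\mathfrak{h}_{\gamma_2})= \gamma_2^{-1}\gamma_1^{-1}\alpha\gamma_1\gamma_2= (\gamma_1\gamma_2)^{-1}\alpha(\gamma_1\gamma_2)= (\alpha)\mathfrak{h}_{\gamma_1\gamma_2},
\end{equation*}
while (iii) is immediate from $(\alpha)\mathfrak{h}_1=1\cdot\alpha\cdot 1=\alpha$. I do not expect any real obstacle here: the whole lemma is a routine verification that conjugation by a unit behaves as expected on idempotents, the only structural input being Proposition~\ref{proposition-2.1}$(i)$ together with the bijectivity of $\gamma$ on the real line.
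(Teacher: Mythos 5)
Your verification that each $\mathfrak{h}_\gamma$ maps idempotents to idempotents, that it is a semilattice homomorphism, that $\gamma\mapsto\mathfrak{h}_\gamma$ is multiplicative, and that $\mathfrak{h}_1$ is the identity, is correct and coincides with the paper's computations (the paper proves the semilattice-homomorphism property by the same insertion of $\gamma\gamma^{-1}=1$, and the multiplicativity of $\mathfrak{h}$ by the same rebracketing of $(\gamma\delta)^{-1}\varepsilon\gamma\delta$). However, the lemma asserts that each $\mathfrak{h}_\gamma$ is an \emph{automorphism} of $E(\mathscr{P\!\!H}^+_{\!\!\operatorname{\textsf{cf}}}\!(\mathbb{R}))$, not merely an endomorphism, and your plan explicitly stops at ``endomorphism.'' Bijectivity is not addressed anywhere in the proposal, whereas the paper spends roughly half of its proof on exactly that point: surjectivity, because $\gamma\varepsilon\gamma^{-1}$ is again an idempotent and $(\gamma\varepsilon\gamma^{-1})\mathfrak{h}_\gamma=\varepsilon$; and injectivity, because $\gamma^{-1}\varepsilon\gamma=\gamma^{-1}\iota\gamma$ implies $\varepsilon=\gamma\gamma^{-1}\varepsilon\gamma\gamma^{-1}=\gamma\gamma^{-1}\iota\gamma\gamma^{-1}=\iota$, using that $\gamma$ is a unit.

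The gap is real but shallow: your own concrete description already contains the fix. You identify $(\alpha)\mathfrak{h}_\gamma$, for $\alpha$ the partial identity on the cofinite set $D$, with the partial identity on $(D)\gamma$; since $D\mapsto(D)\gamma$ is a bijection of the family of cofinite subsets of $\mathbb{R}$ (with inverse $D\mapsto(D)\gamma^{-1}$), and since by Proposition~\ref{proposition-2.1}$(i)$ the idempotents are exactly the partial identities of cofinite subsets, it follows that $\mathfrak{h}_\gamma$ is a bijection of $E(\mathscr{P\!\!H}^+_{\!\!\operatorname{\textsf{cf}}}\!(\mathbb{R}))$ with inverse $\mathfrak{h}_{\gamma^{-1}}$. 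Add that sentence (or reproduce the paper's purely algebraic surjectivity/injectivity argument) and the proof is complete.
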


\begin{proof}
Для довільних $\gamma\in H(1)$, $\varepsilon,\iota\in E(\mathscr{P\!\!H}^+_{\!\!\operatorname{\textsf{cf}}}\!(\mathbb{R}))$ маємо, що
\begin{equation*}
    (\varepsilon\iota)\mathfrak{h}_\gamma=\gamma^{-1}\varepsilon\iota\gamma=\gamma^{-1}\varepsilon\gamma\gamma^{-1}\iota\gamma= (\varepsilon)\mathfrak{h}_\gamma(\iota)\mathfrak{h}_\gamma,
\end{equation*}
а отже $\mathfrak{h}_\gamma$~--- гомоморфізм напівгратки $E(\mathscr{P\!\!H}^+_{\!\!\operatorname{\textsf{cf}}}\!(\mathbb{R}))$. Також, оскільки для довільних $\gamma\in H(1)$ та $\varepsilon\in E(\mathscr{P\!\!H}^+_{\!\!\operatorname{\textsf{cf}}}\!(\mathbb{R}))$ елемент $\gamma\varepsilon\gamma^{-1}$ є ідемпотентом напівгрупи $\mathscr{P\!\!H}^+_{\!\!\operatorname{\textsf{cf}}}\!(\mathbb{R})$ і $(\gamma\varepsilon\gamma^{-1})\mathfrak{h}_\gamma=\gamma^{-1}(\gamma\varepsilon\gamma^{-1})\gamma=\varepsilon$, то гомоморфізм $\mathfrak{h}_\gamma$ є сюр'єктивним відображенням. Очевидно, що $\mathfrak{h}_1$~--- тотожне відображення напівгратки $E(\mathscr{P\!\!H}^+_{\!\!\operatorname{\textsf{cf}}}\!(\mathbb{R}))$.

Припустимо, що $(\varepsilon)\mathfrak{h}_\gamma=(\iota)\mathfrak{h}_\gamma$, для деяких $\gamma\in H(1)$, $\varepsilon,\iota\in E(\mathscr{P\!\!H}^+_{\!\!\operatorname{\textsf{cf}}}\!(\mathbb{R}))$. Оскільки $H(1)$~--- група одиниць напівгрупи $\mathscr{P\!\!H}^+_{\!\!\operatorname{\textsf{cf}}}\!(\mathbb{R})$, то з рівностей
\begin{equation*}
    \gamma^{-1}\varepsilon\gamma=(\varepsilon)\mathfrak{h}_\gamma=(\iota)\mathfrak{h}_\gamma=\gamma^{-1}\iota\gamma
\end{equation*}
випливає, що
\begin{equation*}
    \varepsilon=1\varepsilon 1=\gamma\gamma^{-1}\varepsilon\gamma\gamma^{-1}=\gamma\gamma^{-1}\iota\gamma\gamma^{-1}=1\iota 1=\iota,
\end{equation*}
а отже $\mathfrak{h}_\gamma$~--- ав\-то\-мор\-фіз\-м напівгратки $E(\mathscr{P\!\!H}^+_{\!\!\operatorname{\textsf{cf}}}\!(\mathbb{R}))$.

Зафіксуємо довільні $\gamma,\delta\in H(1)$. Тоді для довільного ідемпотента $\varepsilon\in \mathscr{P\!\!H}^+_{\!\!\operatorname{\textsf{cf}}}\!(\mathbb{R})$ маємо, що
\begin{equation*}
    (\varepsilon)\mathfrak{h}_{\gamma\delta}= (\gamma\delta)^{-1}\varepsilon\gamma\delta= \delta^{-1}\gamma^{-1}\varepsilon\gamma\delta= \delta^{-1}(\varepsilon)\mathfrak{h}_{\gamma}\delta= \left((\varepsilon)\mathfrak{h}_{\gamma}\right)\mathfrak{h}_\delta= (\varepsilon)\left(\mathfrak{h}_{\gamma}\cdot\mathfrak{h}_\delta\right),
\end{equation*}
а отже так означене відображення $\mathfrak{h}\colon H(1)\rightarrow\operatorname{\textsf{End}}\left(E(\mathscr{P\!\!H}^+_{\!\!\operatorname{\textsf{cf}}}\!(\mathbb{R}))\right)$ є гомоморфізмом.
\end{proof}

Наступна теорема описує структуру напівгрупи $\mathscr{P\!\!H}^+_{\!\!\operatorname{\textsf{cf}}}\!(\mathbb{R})$.

\begin{theorem}\label{theorem-3.5}
Напівгрупа $\mathscr{P\!\!H}^+_{\!\!\operatorname{\textsf{cf}}}\!(\mathbb{R})$ ізоморфна напівпрямому добутку $\mathscr{H}^+\!(\mathbb{R})\ltimes_\mathfrak{h}\mathscr{P}_{\!\infty}(\mathbb{R})$ вільної напівгратки з одиницею $(\mathscr{P}_{\!\infty}(\mathbb{R}),\cup)$ групою $\mathscr{H}^+\!(\mathbb{R})$ усіх гомеоморфізмів, що зберігають орієнтацію прос\-то\-ру $\mathbb{R}$.
\end{theorem}

\begin{proof}
Оскільки група одиниць $H(1)$ напівгрупи $\mathscr{P\!\!H}^+_{\!\!\operatorname{\textsf{cf}}}\!(\mathbb{R})$ ізоморфна групі $\mathscr{H}^+\!(\mathbb{R})$ усіх гомеоморфізмів, що зберігають орієнтацію прос\-то\-ру $\mathbb{R}$, то за твердженням~\ref{proposition-2.1}$(ii)$ нам достатньо показати, що напівгрупа $\mathscr{P\!\!H}^+_{\!\!\operatorname{\textsf{cf}}}\!(\mathbb{R})$ ізоморфна напівпрямому добутку $H(1)\ltimes_\mathfrak{h}E(\mathscr{P\!\!H}^+_{\!\!\operatorname{\textsf{cf}}}\!(\mathbb{R}))$ на\-пів\-грат\-ки $E(\mathscr{P\!\!H}^+_{\!\!\operatorname{\textsf{cf}}}\!(\mathbb{R}))$ групою одиниць $H(1)$ напівгрупи $\mathscr{P\!\!H}^+_{\!\!\operatorname{\textsf{cf}}}\!(\mathbb{R})$ сто\-сов\-но гомоморфізму $\mathfrak{h}\colon H(1)\rightarrow\operatorname{\textsf{End}}\left(E(\mathscr{P\!\!H}^+_{\!\!\operatorname{\textsf{cf}}}\!(\mathbb{R}))\right) \colon \gamma\mapsto\mathfrak{h}_\gamma$, де $(\alpha)\mathfrak{h}_\gamma=\gamma^{-1}\alpha\gamma$.

Означимо відображення $\mathfrak{T}\colon\mathscr{P\!\!H}^+_{\!\!\operatorname{\textsf{cf}}}\!(\mathbb{R})\rightarrow H(1)\ltimes_\mathfrak{h}E(\mathscr{P\!\!H}^+_{\!\!\operatorname{\textsf{cf}}}\!(\mathbb{R}))$ за формулою
\begin{equation*}
    (\alpha)\mathfrak{T}=\left(\gamma_\alpha,\alpha^{-1}\alpha\right),
\end{equation*}
де елемент $\gamma_\alpha$ групи одиниць $H(1)$ напівгрупи $\mathscr{P\!\!H}^+_{\!\!\operatorname{\textsf{cf}}}\!(\mathbb{R})$, визначений у твердженні~\ref{propositiony-2.12}. З тверд\-жен\-ня~\ref{proposition-2.1} і наслідку~\ref{corollary-3.1} випливає, що відображення $\mathfrak{T}\colon\mathscr{P\!\!H}^+_{\!\!\operatorname{\textsf{cf}}}\!(\mathbb{R})\rightarrow H(1)\ltimes_\mathfrak{h}E(\mathscr{P\!\!H}^+_{\!\!\operatorname{\textsf{cf}}}\!(\mathbb{R}))$ означене коректно, і воно є сюр'єктивним. Припустимо, що існують еле\-мен\-ти $\alpha$ та $\beta$ напівгрупи $\mathscr{P\!\!H}^+_{\!\!\operatorname{\textsf{cf}}}\!(\mathbb{R})$ такі, що $(\alpha)\mathfrak{T}=(\beta)\mathfrak{T}$. Тоді $\left(\gamma_\alpha,\alpha^{-1}\alpha\right)=\left(\gamma_\beta,\beta^{-1}\beta\right)$ і, використавши твердження~\ref{propositiony-2.12} i лему~1.4.6 з \cite{Lawson1998}, отримуємо
\begin{equation*}
    \alpha=\gamma_\alpha\alpha^{-1}\alpha=\gamma_\beta\beta^{-1}\beta=\beta,
\end{equation*}
а отже відображення $\mathfrak{T}\colon\mathscr{P\!\!H}^+_{\!\!\operatorname{\textsf{cf}}}\!(\mathbb{R})\rightarrow H(1)\ltimes_\mathfrak{h}E(\mathscr{P\!\!H}^+_{\!\!\operatorname{\textsf{cf}}}\!(\mathbb{R}))$ є сюр'єктивним.

Нехай $\alpha$ та $\beta$ --- довільні елементи напівгрупи $\mathscr{P\!\!H}^+_{\!\!\operatorname{\textsf{cf}}}\!(\mathbb{R})$. Тоді за  теоремою~\ref{theorem-2.13} має\-мо, що $\alpha\beta\mathfrak{C}_{\textsf{mg}}\gamma_\alpha\gamma_\beta$, і оскільки $\gamma_\alpha,\gamma_\beta\in H(1)$, то отримуємо, що $\gamma_\alpha\gamma_\beta=\gamma_{\alpha\beta}$. Звідси вип\-ли\-ває, що
\begin{equation*}
    (\alpha)\mathfrak{T}(\beta)\mathfrak{T}= \left(\gamma_\alpha,\alpha^{-1}\alpha\right)\left(\gamma_\beta,\beta^{-1}\beta\right)= \left(\gamma_\alpha\gamma_\beta,\gamma_\beta^{-1}\alpha^{-1}\alpha\gamma_\beta\beta^{-1}\beta\right)= \left(\gamma_{\alpha\beta},\gamma_\beta^{-1}\alpha^{-1}\alpha\gamma_\beta\beta^{-1}\beta\right).
\end{equation*}
За лемою~\ref{lemma-3.4} відображення $\mathfrak{h}_\gamma\colon E(\mathscr{P\!\!H}^+_{\!\!\operatorname{\textsf{cf}}}\!(\mathbb{R}))\rightarrow E(\mathscr{P\!\!H}^+_{\!\!\operatorname{\textsf{cf}}}\!(\mathbb{R}))\colon\alpha\mapsto\gamma^{-1}\alpha\gamma$ є ав\-то\-мор\-фіз\-мом напівгратки $E(\mathscr{P\!\!H}^+_{\!\!\operatorname{\textsf{cf}}}\!(\mathbb{R}))$, а отже отримуємо, що елемент $\gamma_\beta^{-1}\alpha^{-1}\alpha\gamma_\beta$ є ідем\-по\-тен\-том напівгрупи $\mathscr{P\!\!H}^+_{\!\!\operatorname{\textsf{cf}}}\!(\mathbb{R})$. Оскільки $\mathscr{P\!\!H}^+_{\!\!\operatorname{\textsf{cf}}}\!(\mathbb{R})$~--- інверсна напівгрупа, то за твердженням~\ref{propositiony-2.12} i лемою~1.4.6 з \cite{Lawson1998} маємо, що  $\beta=\gamma_\beta\beta^{-1}\beta$, а отже:
\begin{equation*}
\begin{split}
  \gamma_\beta^{-1}\alpha^{-1}\alpha\gamma_\beta\beta^{-1}\beta & = \left(\gamma_\beta^{-1}\alpha^{-1}\alpha\gamma_\beta\right)\left(\beta^{-1}\beta\right)\left(\beta^{-1}\beta\right)=\\
    & = \left(\beta^{-1}\beta\gamma_\beta^{-1}\right)\left(\alpha^{-1}\alpha\right)\left(\gamma_\beta\beta^{-1}\beta\right)=\\
    & = \left(\gamma_\beta\beta^{-1}\beta\right)^{-1}\left(\alpha^{-1}\alpha\right)\left(\gamma_\beta\beta^{-1}\beta\right)=\\
    & = \beta^{-1}\left(\alpha^{-1}\alpha\right)\beta=\\
    & = \left(\beta^{-1}\alpha^{-1}\right)\left(\alpha\beta\right)=\\
    & = \left(\alpha\beta\right)^{-1}\left(\alpha\beta\right).
\end{split}
\end{equation*}
Таким чином, отримуємо
\begin{equation*}
    (\alpha\beta)\mathfrak{T}=\left(\gamma_{\alpha\beta},(\alpha\beta)^{-1}\alpha\beta\right)=(\alpha)\mathfrak{T}(\beta)\mathfrak{T},
\end{equation*}
а отже відображення $\mathfrak{T}\colon\mathscr{P\!\!H}^+_{\!\!\operatorname{\textsf{cf}}}\!(\mathbb{R})\rightarrow H(1)\ltimes_\mathfrak{h}E(\mathscr{P\!\!H}^+_{\!\!\operatorname{\textsf{cf}}}\!(\mathbb{R}))$ є гомоморфізмом, що і за\-вер\-шує доведення нашої теореми.
\end{proof}

%%%%%%%%%%%%%%%%%%%%%%%%%%%%%%%%%%%%%%%%%%%%%%%%%%%%%%%%%%%
%\medskip
\section*{Подяка}

Автори виражають щиру подяку рецензенту за корисні зауваження та коментарі до рукопису цієї праці.

%%%%%%%%%%%%%%%%%%%%%%%%%%%%%%%%%%%%%%%%%%%%%%%%%%%%%%%%%%%%%%%


\begin{thebibliography}{10}

\bibitem{Beklaryan-2004}
Л. А. Бекларян,
Группы гомеоморфизмов прямой и окружности. Топологические характеристики и метрические инварианты,
УМН  \textbf{59}:4(358) (2004), 3--68;
\textbf{English version} in:
L. A. Beklaryan,
\emph{Groups of homeomorphisms of the line and the circle. Topological characteristics and metric invariants},
Russian Math. Surveys \textbf{59}:4 (2004), 599--660.	

\bibitem{Beklaryan-2015}
Л. А. Бекларян,
Группы гомеоморфизмов прямой и окружности. Метрические инварианты и вопросы классификации,
УМН, \textbf{70}:2(422) (2015), 3--54;
\textbf{English version} in:
L. A. Beklaryan,
\emph{Groups of line and circle homeomorphisms. Metric invariants and questions of classification},
Russian Math. Surveys  \textbf{70}:2 (2015), 203--248.

\bibitem{Gluskin1959}
Л.~M.~Глускин, {Полугруппа гомеоморфных отображений отрезка},
Матем. сб. \textbf{49}(\textbf{91}):1 (1959), 13--28.


\bibitem{Gluskin1959a}
Л.~M.~Глускин,
\emph{Полугруппы топологических отображений},
ДАН СССР \textbf{125} (1959), 699--702.

\bibitem{Gluskin1959b}
Л.~M.~Глускин,
\emph{Транзитивные полугруппы преобразований},
ДАН СССР \textbf{129} (1959), 16--18.

\bibitem{Gluskin1959c}
Л.~M.~Глускин,
\emph{Идеалы полугрупп преобразований},
Матем. сб. \textbf{47}(\textbf{89}):1 (1959), 111--130.

\bibitem{Gluskin1960}
Л.~М.~Глускін,
\emph{Про одну півгруппу непреривиих функцій},
Доповіді АН УРСР \textbf{5} (1960), 582--585.


\bibitem{Gluskin1960a}
Л.~М.~Глускин,
\emph{Полугруппы топологических преобразований},
Изв. вузов. Матем. \textbf{1} (1963), 54-–65.

\bibitem{Insaridze}
Х. Н. Инасаридзе,
 \emph{О простых полугруппах},
Матем. сб. \textbf{57}(\textbf{99}):2 (1962), 225--232.

\bibitem{Shneperman1962}
Л.~Б.~Шнеперман,
\emph{Полугруппы непрерывных преобразований},
ДАН СССР \textbf{144}:3 (1962), 509--511.

\bibitem{Shneperman1962a}
Л.~Б.~Шнеперман,
\emph{Полугруппы непрерывных преобразований и гомеоморфизмов простой дуги},
ДАН СССР \textbf{146} (1962), 1301--1304.


\bibitem{Shneperman1963}
Л.~Б.~Шнеперман,,
\emph{Полугруппы непрерывных преобразований метрических пространств},
Матем. сб. \textbf{61}(\textbf{103}):3 (1963), 306-–318.

\bibitem{Shneperman1965}
Л.~Б.~Шнеперман,
\emph{Полугруппы непрерывных преобразований замкнутых
множеств числовой прямой},
Изв. вузов. Матем. \textbf{6} (1965), 166--175.

\bibitem{Shneperman1965a}
Л.~Б.~Шнеперман,
\emph{Полугруппы непрерывных преобразований топологических пространств},
Сиб. матем. журн. \textbf{4}:1 (1965),  221--229.


\bibitem{Shneperman1966}
Л.~Б.~Шнеперман,
\emph{Полугруппа гомеоморфизмов простой дуги},
Изв. вузов. Матем. \textbf{2} (1966), 127--136.


\bibitem{Shutov1963a}
Э. Г. Шутов,
\emph{О гомоморфизмах некоторых полугрупп непрерывнвх функций},
Сиб. матем. журн. \textbf{4}:3 (1963), 695--701.

\bibitem{Shutov1963b}
Э. Г. Шутов,
\emph{О гомоморфизмах некоторых полугрупп непрерывных монотонных функций},
Сиб. матем. журн. \textbf{4}:4 (1963), 944--950.


\bibitem{Anderson1958}
R. D. Anderson,
\emph{The algebraic simplicity of certain groups of homeomorphisms},
Amer. J. Math. \textbf{80}:4 (1958), 955---963.

\bibitem{Cezus}
F.~A.~Cezus,
\emph{Green's  relations  in  semigroups of  functions},
 Ph.D. Thesis, Australian  National University, Canberra,
Australia, 1972.

\bibitem{Chuchman2011ADM}
I.~Chuchman,
\emph{On a semigroup of closed connected partial homeomorphisms of the unit interval with a fixed point},
Algebra Discr. Math. \textbf{12} (2011), 38--52.



\bibitem{CliffordPreston1961-1967}
A.~H.~Clifford and G.~B.~Preston,
\emph{The Algebraic Theory of Semigroups}, Vols. I and II,
Amer. Math. Soc. Surveys {\bf 7}, Providence, R.I.,  1961 and  1967.

\bibitem{Engelking1989}
R.~Engelking,
\emph{General Topology}, 2nd ed., Heldermann, Berlin, 1989.

\bibitem{Gluskin1977a}
L.~M.~Gluskin, B.~M.~Schein, L.~B. \v{S}neperman,
I.~S.~Yaroker,
\emph{Addendum to a survey of semigroups of continuous
self-maps},
Semigroup Forum \textbf{14} (1977), 95--125.

\bibitem{Jarnik}
 V. Jarnik, et V. Knichal,
\emph{Sur  l'approximation  des  fonctions
continues  par  les  superpositlons  de deux  fonctlons},
Fund.  Math. \textbf{24} (1935), 206--208.


\bibitem{Katok-Hasselblatt-1995}
A. Katok and B. Hasselblatt,
\emph{Introduction to the Modern Theory of Dynamical Systems},
Encyclopedia of Mathematics and its Applications, vol. 54, Cambridge University Press, Cambridge, 1995.

\bibitem{Lawson1998}
M.~Lawson,
\emph{Inverse Semigroups. The Theory of Partial Symmetries},
Singapore: World Scientific, 1998.

\bibitem{Magill1975}
K.~D.~Magill, jr.,
\emph{A survey of semigroups of continuous selfmaps},
Semigroup Forum \textbf{11} (1975/1976), 189--282.

\bibitem{McFadden-Carroll-1971}
R. McFadden and L. O’Carroll,
\emph{$F$-inverse semigroups},
Proc. Lond. Math. Soc., III. Ser. \textbf{22} (1971), 652--666.



\bibitem{Mioduszewski}
J. V. Mioduszewski,
\emph{On a  quasi-ordering  in the class  of
continuous mappings of a  closed  interval  into itself},
Colloq. Math. \textbf{9} (1962), 233--240.


\bibitem{Munn1966}
W.~D.~Munn,
\emph{Uniform semilattices and bisimple inverse semigroups},
Quart. J. Math., \textbf{17}:1 (1966), 151--159.

\bibitem{Reilly}
S.~B.~O'Reilly,
\emph{The characteristic semigroup of topological space},
Gen. Topoi.  Appl. \textbf{5} (1975), 95--106.


\bibitem{Petrich1984}
M.~Petrich,
\emph{Inverse Semigroups},
John Wiley $\&$ Sons, New York, 1984.

\bibitem{Rosicky1974}
J. V. Rosick\'{y},
\emph{Remarks  on  topologies  uniquely determined  by
their  continuous  self maps},
Czechoslovak Math. J. \textbf{24}:3 (1974), 373--377.

\bibitem{Rosicky1974a}
J. V. Rosick\'{y},
\emph{The topology of  the  unit  interval is  not uniquely
determined  by its  continuous self maps  among  set  systems},
Colloq.  Math. \textbf{31} (1974), 179--188.

\bibitem{Warndof}
J. C. Warndof,
\emph{Topologies  uniquely  determined by their continuous selfmaps},
Fund. Math. \textbf{66}:1 (1969), 25--43.


\end{thebibliography}
\end{document}